\documentclass[12pt]{amsart}

  \usepackage{graphicx}
  \usepackage{pictexwd,dcpic}
  \usepackage{latexsym,amscd,amssymb,epsfig,a4wide}

  \def\RR{{\mathbb R}}

  \def\sm{\smallsetminus}

  \def\fvec{{\mathfrak f}}
  \def\gvec{{\mathfrak g}}
  
  \def\ppgvec{\mathop{\mathfrak g}''}
  
  \def\hvec{{\mathfrak h}}
  \def\phvec{{\mathfrak h}'}
  \def\pphvec{\mathop{\mathfrak h}''}
  \def\pphvec{{\mathfrak h}''}

  \def\gG{{\mathcal G}}

  \def\link{{\mathrm{lk}}}

  \def\cost{{\rm cost}}

  \def\wbeta{{\widetilde{\beta}}}

  \theoremstyle{plain}
    \newtheorem{theorem}{Theorem}[section]
    \newtheorem{proposition}[theorem]{Proposition}
    
    \newtheorem{corollary}[theorem]{Corollary}
    
  \theoremstyle{definition}
    \newtheorem{definition}[theorem]{Definition}
    \newtheorem{example}[theorem]{Example}
    
    \newtheorem{question}[theorem]{Question}
    \newtheorem{remark}[theorem]{Remark}

  \numberwithin{equation}{section}


\begin{document}

  \title[Buchsbaum* Complexes]{Buchsbaum* Complexes}

  \author{Christos~A.~Athanasiadis}
  \address{Department of Mathematics \\
           Division of Algebra-Geometry \\
           University of Athens \\
           Panepistimioupolis, Athens 15784 \\
           Greece}
  \email{caath@math.uoa.gr}

  \author{Volkmar~Welker}
  \address{Fachbereich Mathematik und Informatik\\
           Philipps-Universit\"at Marburg\\
           35032 Marburg, Germany}
  \email{welker@mathematik.uni-marburg.de}

  \date{November 9, 2010}

  \begin{abstract}
    A class of finite simplicial complexes, which we call Buchsbaum* over a field, is 
    introduced. Buchsbaum* complexes generalize triangulations of orientable homology 
    manifolds as well as doubly Cohen-Macaulay complexes. By definition, the Buchsbaum* 
    property depends only on the geometric realization and the field. Characterizations
    in terms of simplicial homology are given. It is proved that 
    Buchsbaum* complexes are doubly Buchsbaum. Various constructions, among them one 
    which generalizes convex ear decompositions, are shown to yield Buchsbaum* simplicial 
    complexes. Graph theoretic and enumerative properties of Buchsbaum* complexes are 
    investigated.
  \end{abstract}

  \maketitle

  \section{Introduction}
  \label{sec:intro}

  A major theme in the study of finite simplicial complexes in the past few decades has 
  been the interplay between their algebraic, combinatorial, homological and topological 
  properties. Several classes of simplicial complexes, such as Buchsbaum, Cohen-Macaulay 
  or Gorenstein complexes, have been introduced and studied in order to isolate important 
  features of triangulations of fundamental geometric objects, such as balls, spheres and 
  various other manifolds. We refer the reader to \cite{Sta} for a comprehensive 
  introduction to the subject. The objective of this paper is to introduce and develop 
  the basic properties of a new class of simplicial complexes, named Buchsbaum* complexes, 
  which generalize triangulations of orientable homology manifolds.

  In this introductory section we motivate our main definition and outline the remainder 
  of the paper (we refer the reader to \cite[Chapter II]{Sta} \cite[Chapter 5]{BH} 
  \cite[Chapter II]{SV} for any undefined terminology and to \cite{Mu} for background on
  algebraic topology). A \emph{simplicial complex} over the ground set $\Omega$ is a 
  collection $\Delta$ of subsets of $\Omega$ such that $\sigma \subseteq \tau \in \Delta$ 
  implies $\sigma \in \Delta$. Throughout this paper, we will always assume $\Omega$ to be
  finite (thus we will only consider finite simplicial complexes). Given a field $k$, the 
  \emph{face ring} (or \emph{Stanley-Reisner ring}) $k[\Delta]$ of $\Delta$ over $k$ is the 
  quotient of the polynomial ring $k[x_\omega: \omega \in \Omega]$ by the ideal generated 
  by the monomials $\prod_{\omega \in N} x_\omega$ for all subsets $N$ of $\Omega$ not in 
  $\Delta$. Recall (see \cite{Sta}) that $\Delta$ is called \emph{Buchsbaum} (respectively, 
  \emph{Cohen-Macaulay}, \emph{Gorenstein}) over $k$ if the face ring $k[\Delta]$ is a 
  Buchsbaum (respectively, Cohen-Macaulay, Gorenstein) ring. Such a complex $\Delta$ is 
  called \emph{doubly Buchsbaum} \cite{Mi} (respectively, \emph{doubly Cohen-Macaulay} 
  \cite{Ba} \cite[p.~71]{Sta}) over $k$ if for every vertex $v$ of $\Delta$, the complex 
  $\Delta \sm v$, obtained from $\Delta$ by removing all faces which contain $v$, is 
  Buchsbaum (respectively, Cohen-Macaulay) over $k$ of the same dimension as $\Delta$.
 
  It is known that Buchsbaumness \cite{Sch}\cite[Theorem 8.1]{Sta}; see also Theorem 
  \ref{BuchsbaumCharacterization} (respectively, Cohen-Macaulayness; see \cite{Mu2} 
  \cite[Proposition 4.3]{Sta}) of $\Delta$ is a topological property, meaning that it 
  depends only on the homeomorphism type of the geometric realization $|\Delta|$ 
  \cite[Section 9]{Bj} of $\Delta$. For instance, all triangulations of manifolds (with 
  or without boundary) are Buchsbaum and all triangulations of balls and spheres are 
  Cohen-Macaulay over all fields. It was conjectured by Baclawski \cite{Ba} and proved 
  by Walker \cite{Wa} as an immediate consequence of the following theorem, that double 
  Cohen-Macaulayness is a topological property as well. In the sequel, we will write 
  $\widetilde{H}_i(X;k)$ for the reduced singular homology of the space $X$ and $H_i 
  (X,A;k)$ for the singular homology of the pair of spaces $(X,A)$. We will also write 
  $\widetilde{H}_i(\Delta;k)$ and $H_i (\Delta, \Gamma;k)$ for the (reduced) simplicial 
  homology of the simplicial complex $\Delta$ and of the pair of simplicial complexes 
  $(\Delta, \Gamma)$. 

  \begin{theorem} {\rm (Walker, \cite[Theorem 9.8]{Wa})}  \label{Wa2-CM}
  Let $\Delta$ be a $(d-1)$-dimensional Cohen-Macaulay simplicial complex 
  over a field $k$. The following conditions are equivalent:
     \begin{itemize}
        \item[(i)] $\Delta$ is doubly Cohen-Macaulay over $k$.
        \item[(ii)] $\widetilde{H}_{d-2} (|\Delta|-p;k) = 0$ holds
                    for every $p  \in |\Delta|$. 
     \end{itemize}
  \end{theorem}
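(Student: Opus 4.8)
The plan is to introduce an intermediate condition (iii) and prove (i) $\Leftrightarrow$ (iii) $\Leftrightarrow$ (ii), where (iii) asserts that for every face $\sigma$ of $\Delta$ the \emph{restriction homomorphism}
\[
\phi_\sigma\colon \widetilde H_{d-1}(\Delta;k)\longrightarrow \widetilde H_{d-1-|\sigma|}(\link_\Delta\sigma;k),\qquad
\Bigl[\textstyle\sum_F a_F F\Bigr]\longmapsto\Bigl[\textstyle\sum_{F\supseteq\sigma}\pm a_F\,(F\sm\sigma)\Bigr]
\]
(the sums over the facets, i.e.\ the $(d-1)$-dimensional faces, $F$ of $\Delta$) is surjective. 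Its target is the top homology of $\link_\Delta\sigma$, which is Cohen--Macaulay over $k$ of dimension $d-1-|\sigma|$.

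For (ii) $\Leftrightarrow$ (iii): given $p\in|\Delta|$, let $\sigma$ be the unique face with $p$ in its relative interior. A radial deformation retraction inside the open star of $\sigma$, away from $p$, shows that $|\Delta|-p$ is homotopy equivalent, compatibly with the inclusions into $|\Delta|$, to the realization of the subcomplex $\cost_\Delta\sigma=\{\tau\in\Delta:\sigma\not\subseteq\tau\}$; when $\sigma=\{v\}$ this is the familiar $|\Delta|-v\simeq|\Delta\sm v|$. Hence $\widetilde H_*(|\Delta|,|\Delta|-p;k)\cong\widetilde H_*(\Delta,\cost_\Delta\sigma;k)$, and the quotient complex $\widetilde C_*(\Delta)/\widetilde C_*(\cost_\Delta\sigma)$ has in degree $n$ the span of the $n$-faces containing $\sigma$; the bijection $\sigma\cup\tau\mapsto\tau$ identifies it, after the shift $|\sigma|$, with the reduced chain complex of $\link_\Delta\sigma$, so $\widetilde H_n(\Delta,\cost_\Delta\sigma;k)\cong\widetilde H_{n-|\sigma|}(\link_\Delta\sigma;k)$, and under this identification the canonical map $\widetilde H_{d-1}(\Delta;k)\to\widetilde H_{d-1}(\Delta,\cost_\Delta\sigma;k)$ is exactly $\phi_\sigma$. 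Since $\Delta$ and its links are Cohen--Macaulay, Reisner's criterion gives $\widetilde H_i(\Delta;k)=0$ for $i<d-1$ and $\widetilde H_i(\link_\Delta\sigma;k)=0$ for $i<d-1-|\sigma|$; feeding these into the long exact sequence of the pair $(\Delta,\cost_\Delta\sigma)$ yields $\widetilde H_i(|\Delta|-p;k)=0$ for $i<d-2$ and $\widetilde H_{d-2}(|\Delta|-p;k)\cong\operatorname{coker}\phi_\sigma$. Thus (ii) holds iff every $\phi_\sigma$ is onto.

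For (iii) $\Leftrightarrow$ (i): the restriction maps compose, $z|_\sigma=(z|_\rho)|_{\sigma\sm\rho}$ for $\rho\subseteq\sigma$, the inner restriction taken inside $\link_\Delta\rho$; running along a maximal chain $\emptyset=\rho_0\subsetneq\cdots\subsetneq\rho_m=\sigma$ exhibits $\phi_\sigma$ as a composite of the one-step maps $\phi^{(1)}_{\rho,v}\colon\widetilde H_{d-1-|\rho|}(\link_\Delta\rho;k)\to\widetilde H_{d-2-|\rho|}(\link_\Delta(\rho\cup v);k)$, each of which is $\phi_{\{v\}}$ for the Cohen--Macaulay complex $\link_\Delta\rho$. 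Since a composite of surjections is surjective and $\phi^{(1)}_{\rho,v}\circ\phi_\rho=\phi_{\rho\cup v}$ onto forces $\phi^{(1)}_{\rho,v}$ onto, (iii) is equivalent to surjectivity of all $\phi^{(1)}_{\rho,v}$ with $\rho\in\Delta$, $v\notin\rho$, $\rho\cup v\in\Delta$. By the analysis above applied to $\link_\Delta\rho$ at the vertex $v$, this means $\widetilde H_{d-2-|\rho|}(\link_{\Delta\sm v}\rho;k)=0$ for every such $\rho,v$ (the group being $0$ automatically if $\rho\cup v\notin\Delta$), while that analysis also supplies $\widetilde H_i(\link_{\Delta\sm v}\rho;k)=0$ for $i<d-2-|\rho|$. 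By Reisner's criterion this says exactly that $\Delta\sm v$ is Cohen--Macaulay over $k$ of dimension $d-1$ for every vertex $v$ --- i.e.\ (i); here purity of $\Delta\sm v$ is forced, since a facet of $\Delta\sm v$ with fewer than $d$ vertices would, by purity of $\Delta$, be a ridge $\rho$, whence $\link_{\Delta\sm v}\rho=\{\emptyset\}$ has $\widetilde H_{-1}\neq0$, contradicting Reisner's criterion at $\rho$. Combining the two chains of equivalences proves the theorem.

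The point that will require the most care is the identification made above: that $|\Delta|-p$ is homotopy equivalent to $|\cost_\Delta\sigma|$ over $|\Delta|$, and that under the resulting isomorphism $\widetilde H_{d-1}(\Delta,\cost_\Delta\sigma;k)\cong\widetilde H_{d-1-|\sigma|}(\link_\Delta\sigma;k)$ the natural map out of $\widetilde H_{d-1}(\Delta;k)$ is precisely the explicit restriction homomorphism $\phi_\sigma$ --- so that these maps obey the composition law used afterwards. Granted this, the rest reduces to the long exact sequence of a pair together with bookkeeping around Reisner's criterion.
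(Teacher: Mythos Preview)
The paper does not prove this theorem at all; it is quoted as Walker's result with a citation to \cite[Theorem 9.8]{Wa} and no argument is given. So there is nothing to compare against directly, and your proposal stands as a self-contained proof. It is essentially correct.

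A few small points. First, your justification that a facet of $\Delta\sm v$ with fewer than $d$ vertices must be a ridge deserves one extra sentence: if $\rho$ is such a facet then every face of $\Delta$ properly containing $\rho$ must contain $v$, and if $\rho\cup\{v,w\}\in\Delta$ for some $w\ne v$ then $\rho\cup\{w\}\in\Delta\sm v$ would contradict maximality of $\rho$; hence $\rho\cup v$ is a facet of $\Delta$ and $|\rho|=d-1$. You have the right conclusion but the phrase ``by purity of $\Delta$'' alone does not make this transparent. Second, at that step you write ``contradicting Reisner's criterion at $\rho$''; strictly speaking what is contradicted is the vanishing $\widetilde H_{-1}(\link_{\Delta\sm v}\rho;k)=0$ that you have just established from (iii), not Reisner's criterion itself (which you are about to invoke). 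These are cosmetic.

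It is worth noting that, although the paper does not reprove Walker's theorem, the machinery you set up is exactly what the paper uses later: the deformation retraction $|\Delta|-p\simeq|\cost_\Delta\sigma|$ and the surjectivity of the canonical maps $\widetilde H_{d-1}(\Delta;k)\to\widetilde H_{d-1}(\Delta,\cost_\Delta\sigma;k)$ reappear in the proof of Proposition~\ref{Local}, where condition (ii) (surjectivity of the costar maps $j_*$ for all pairs $\sigma\subseteq\tau$) characterises the Buchsbaum* property. Your intermediate condition (iii) is the Cohen--Macaulay specialisation of that characterisation, so your argument meshes well with the paper's framework.
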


  Examples of complexes which are doubly Cohen-Macaulay over all fields are all 
  triangulations of spheres. In contrast, no triangulation of a ball is doubly 
  Cohen-Macaulay over any field. 

  Double Buchsbaumness of a simplicial complex is also a topological property 
  \cite{Mi} and thus doubly Buchsbaum complexes generalize homology manifolds 
  (without boundary) in a way analogous to the way doubly Cohen-Macaulay complexes 
  generalize homology spheres. However, in certain respects double Buchsbaumness 
  turns out to be too weak of an analogue of double Cohen-Macaulayness. For instance, 
  it is known \cite[p.~71]{Sta} that every doubly Cohen-Macaulay complex $\Delta$ 
  has non-vanishing top-dimensional homology, whereas this is not true for every 
  doubly Buchsbaum complex (since it is not true for every homology manifold 
  without boundary). These considerations and condition (ii) in Theorem 
  \ref{Wa2-CM} motivate the following definition.

  \begin{definition} \label{def:Buch*}
    Let $\Delta$ be a $(d-1)$-dimensional Buchsbaum simplicial complex over a 
    field $k$. The complex $\Delta$ is called \emph{Buchsbaum* over $k$} if 
    \begin{equation} \label{eq:Buch*}
      \dim_k \widetilde{H}_{d-2}(|\Delta|-p;k) = 
                  \dim_k \widetilde{H}_{d-2}(|\Delta|; k)  
    \end{equation} 
     holds for every $p  \in |\Delta|$. 
  \end{definition}

  The results of this paper show that the notion of a Buchsbaum* complex provides a 
  well behaved manifold analogue to that of a doubly Cohen-Macaulay complex in terms 
  of various homological, graph theoretic and enumerative properties. We summarize 
  some of these results as follows. 

  The class of Buchsbaum* complexes is shown to be included in the class of doubly 
  Buchsbaum complexes (Corollary \ref{MainTheorem}) with non-vanishing top-dimensional 
  homology (Corollary \ref{NonVanishingTopHomology}), to include all triangulations of 
  orientable homology manifolds (Proposition \ref{Manifold}) and to reduce to the class 
  of doubly Cohen-Macaulay complexes, when restricted to the class of all Cohen-Macaulay 
  complexes (Proposition \ref{CMBuchsbaum}). Products of Buchsbaum* complexes and proper 
  skeleta of Buchsbaum complexes are shown to be Buchsbaum* (Propositions \ref{Product} 
  and \ref{1-coskeleton}). A notion of higher Buchsbaum* connectivity, generalizing that 
  of higher Cohen-Macaulay connectivity \cite{Ba}, is introduced in Section 
  \ref{sec:skeleta}, where it is shown that passing to proper skeleta increases the 
  degree of connectivity (Theorem \ref{skeletaconnectivity}). 

  Partially extending results of Kalai \cite{Ka} on homology manifolds and Nevo \cite{Ne} 
  on doubly Cohen-Macaulay complexes, it is shown that the graph of a connected Buchsbaum* 
  complex of dimension $d-1 \ge 2$ is generically $d$-rigid (Theorem \ref{graph}). This 
  implies that the face numbers of such a complex satisfy the inequalities of Barnette's 
  lower bound theorem (Proposition \ref{prop:LBT}) and part of the conditions predicted 
  by the $g$-conjecture for triangulations of spheres (Proposition \ref{prop:g}). An 
  analogue of a recursive formula for the $\hvec$-vector of a pure simplicial complex, in 
  terms of that of the deletion and the link of a vertex, is shown to be valid for 
  Buchsbaum* complexes (Propositions \ref{Deletion} and \ref{Deletion2}) and an 
  application to the face enumeration of flag Buchsbaum* complexes is given (Corollary 
  \ref{Flag}). 

  This paper is structured as follows. Section \ref{sec:char} gives characterizations of 
  Buchsbaum* complexes, deduces their basic properties and lists examples. Section 
  \ref{sec:examples} investigates the behavior of the Buchsbaum* property under standard 
  operations on topological spaces and constructs a large family of Buchsbaum* complexes 
  by suitably gluing orientable homology manifolds with boundary to an orientable homology 
  manifold without boundary (Corollary \ref{Class}).
  Sections \ref{sec:graph} and \ref{sec:enumeration} focus on graph theoretic and 
  enumerative properties. Section \ref{sec:questions} briefly discusses some further 
  properties of Buchsbaum* complexes which appeared in the literature after the results 
  of this paper were first publicized.

  \section{Characterizations and elementary properties}
  \label{sec:char}

  This section provides characterizations and discusses basic properties of Buchsbaum* 
  complexes. Throughout this paper, if not specified otherwise, $k$ is an arbitrary 
  field. We recall the following characterization of Buchsbaum 
  complexes. 

  \begin{theorem}[Schenzel \cite{Sch}]  \label{BuchsbaumCharacterization}
     For a $(d-1)$-dimensional simplicial complex $\Delta$, the following 
     conditions are equivalent:
     \begin{itemize}
        \item[(i)] $\Delta$ is Buchsbaum over $k$.
        
       \item[(ii)] $\Delta$ is pure and $\link_\Delta(\sigma)$ is Cohen-Macaulay 
                    over $k$ for every $\sigma \in \Delta \sm \{ \varnothing \}$. 
       \item[(iii)] $H_i (|\Delta|, |\Delta|-p;k) = 0$ holds for 
                    all $i < d-1$ and $p \in |\Delta|$.
     \end{itemize}
   \end{theorem}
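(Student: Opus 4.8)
The plan is to establish (i)$\Leftrightarrow$(ii) by commutative algebra and (ii)$\Leftrightarrow$(iii) topologically. Write $R=k[x_\omega:\omega\in\Omega]$ and $\mathfrak m=(x_\omega:\omega\in\Omega)$, and recall $\dim k[\Delta]=\dim\Delta+1=d$. I will use three standard facts: (a) Schenzel's cohomological characterization of the Buchsbaum property (see \cite[Chapter II]{SV}), namely that a finitely generated graded $R$-module $M$ is Buchsbaum over $k$ if and only if the canonical maps $\operatorname{Ext}^i_R(k,M)\to H^i_{\mathfrak m}(M)$ are surjective for all $i<\dim M$; (b) Hochster's formula, which in the fine $\mathbb Z^\Omega$-grading reads $H^i_{\mathfrak m}(k[\Delta])_{\mathbf a}\cong\widetilde H_{i-|T|-1}(\link_\Delta T;k)$ when $\mathbf a\le\mathbf 0$ and $T:=\supp(\mathbf a)\in\Delta$, and $H^i_{\mathfrak m}(k[\Delta])_{\mathbf a}=0$ for all other $\mathbf a$; and (c) Reisner's criterion (see \cite{Sta}), that a complex $\Gamma$ is Cohen-Macaulay over $k$ if and only if $\widetilde H_\ell(\link_\Gamma\tau;k)=0$ whenever $\tau\in\Gamma$ and $\ell<\dim\link_\Gamma\tau$.

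For (i)$\Rightarrow$(ii): a Buchsbaum ring is equidimensional, and since the associated primes of $k[\Delta]$ are the $P_F=(x_\omega:\omega\notin F)$ over facets $F$ with $\dim k[\Delta]/P_F=|F|$, equidimensionality forces $\Delta$ to be pure. A Buchsbaum ring is also Cohen-Macaulay on its punctured spectrum, hence $(k[\Delta])_{P_\sigma}$ is Cohen-Macaulay for every $\varnothing\neq\sigma\in\Delta$ (as $P_\sigma\neq\mathfrak m$), which by the standard correspondence between these localizations and the face rings of links means $\link_\Delta\sigma$ is Cohen-Macaulay over $k$. For (ii)$\Rightarrow$(i): applying (c) to each link gives $\widetilde H_\ell(\link_\Delta\sigma;k)=0$ for all $\varnothing\neq\sigma\in\Delta$ and $\ell<\dim\link_\Delta\sigma=d-1-|\sigma|$ (the last equality by purity); substituting into (b) shows that $H^i_{\mathfrak m}(k[\Delta])$ is concentrated in fine degree $\mathbf 0$ for every $i<d$. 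Since the canonical map $\operatorname{Ext}^i_R(k,k[\Delta])_{\mathbf 0}\to H^i_{\mathfrak m}(k[\Delta])_{\mathbf 0}$ is an isomorphism, both groups being $\widetilde H_{i-1}(\Delta;k)$, the hypothesis of (a) holds and $k[\Delta]$ is Buchsbaum.

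For (ii)$\Leftrightarrow$(iii) the key topological input is that if $p$ lies in the relative interior of the necessarily nonempty face $\sigma$, then excision, the identification of the closed star $|\cstar_\Delta\sigma|$ with the join $|\sigma|*|\link_\Delta\sigma|$, and the cone and suspension formulas for reduced homology together give $\widetilde H_j(|\Delta|,|\Delta|-p;k)\cong\widetilde H_{j-|\sigma|}(\link_\Delta\sigma;k)$. Granting (ii): $\Delta$ is pure, so $\dim\link_\Delta\sigma=d-1-|\sigma|$ and Reisner's criterion makes the right-hand side vanish for $j<d-1$, which is (iii). Granting (iii): were $\Delta$ not pure, a maximal face $\tau$ with $\dim\tau<d-1$ would have $\link_\Delta\tau=\{\varnothing\}$, so $\widetilde H_{\dim\tau}(|\Delta|,|\Delta|-p;k)\cong\widetilde H_{-1}(\{\varnothing\};k)\neq 0$ for $p$ in the relative interior of $\tau$, contradicting (iii) since $\dim\tau<d-1$; thus $\Delta$ is pure. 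Then, fixing $\varnothing\neq\sigma\in\Delta$, each face of $\link_\Delta\sigma$ has the form $\rho\sm\sigma$ with $\rho\supseteq\sigma$ in $\Delta$, and $\link_{\link_\Delta\sigma}(\rho\sm\sigma)=\link_\Delta\rho$; applying (iii) to each such nonempty $\rho$ and translating the degree bound via purity verifies Reisner's criterion for $\link_\Delta\sigma$, so it is Cohen-Macaulay over $k$, which is (ii).

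The part I expect to be most delicate is the commutative-algebra half: in particular, the correspondence between the localizations $(k[\Delta])_{P_\sigma}$ and the face rings $k[\link_\Delta\sigma]$ used in (i)$\Rightarrow$(ii) (together with equidimensionality and Cohen-Macaulayness on the punctured spectrum of Buchsbaum rings), and in (ii)$\Rightarrow$(i) the passage through the fine $\mathbb Z^\Omega$-grading, which is what lets one upgrade the link condition---a priori controlling $H^i_{\mathfrak m}(k[\Delta])$ only away from fine degree $\mathbf 0$---to the full surjectivity in Schenzel's criterion, the point being that under (ii) the local cohomology is simply concentrated in degree $\mathbf 0$ below dimension $d$. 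Once the excision identification of local homology with the homology of a link is in place, (ii)$\Leftrightarrow$(iii) is routine.
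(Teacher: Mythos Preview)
The paper does not prove this theorem; it is stated as a known result attributed to Schenzel \cite{Sch} (see also \cite[Theorem~8.1]{Sta}) and used as background. So there is no ``paper's own proof'' to compare against---your proposal is a self-contained reconstruction of the standard argument.

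Your outline is correct and follows the classical route. The topological half (ii)$\Leftrightarrow$(iii), via the identification $\widetilde H_j(|\Delta|,|\Delta|-p;k)\cong\widetilde H_{j-|\sigma|}(\link_\Delta\sigma;k)$ and Reisner's criterion, is clean and complete as written. In the algebraic half, (i)$\Rightarrow$(ii) is fine once one knows that $(k[\Delta])_{P_\sigma}$ Cohen--Macaulay is equivalent to $k[\link_\Delta\sigma]$ Cohen--Macaulay; this is standard (e.g.\ via the isomorphism $k[\Delta][x_\omega^{-1}:\omega\in\sigma]\cong k[\link_\Delta\sigma][x_\omega^{\pm1}:\omega\in\sigma]$), though you might cite it explicitly.

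The one step that deserves more care is in (ii)$\Rightarrow$(i): from Hochster's formula you correctly conclude that $H^i_{\mathfrak m}(k[\Delta])$ is concentrated in degree~$\mathbf 0$ for $i<d$, hence is a $k$-vector space. But in general, $\mathfrak m\cdot H^i_{\mathfrak m}(M)=0$ for $i<\dim M$ only gives \emph{quasi}-Buchsbaum, not Buchsbaum; the surjectivity of $\operatorname{Ext}^i_R(k,M)\to H^i_{\mathfrak m}(M)$ is genuinely stronger. Your assertion that both sides equal $\widetilde H_{i-1}(\Delta;k)$ in degree~$\mathbf 0$ is true for Stanley--Reisner rings, but it is not a tautology---it is essentially the content of Schenzel's theorem that for $k[\Delta]$ quasi-Buchsbaum implies Buchsbaum. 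You should either compute $\operatorname{Ext}^i_R(k,k[\Delta])_{\mathbf 0}$ directly via the Koszul complex (which in squarefree degree~$\mathbf 0$ reduces to the reduced cochain complex of $\Delta$), or cite the fact that for squarefree monomial ideals the two notions coincide (see \cite{Sch} or \cite[Chapter~II]{SV}). As written, this step reads as assuming what you are trying to prove.
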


    \begin{remark} \label{rem:components}
       {\rm Let $\Delta$ be a $(d-1)$-dimensional simplicial complex. The following 
   statements are immediate consequences of Theorem 2.1 and Definition 1.2.
       \begin{itemize}
          \item[(i)] $\Delta$ is Buchsbaum over $k$ if and only
            if every connected component of $\Delta$ is Buchsbaum
            over $k$ of dimension $d-1$.
          \item[(ii)] Assume that $d \ge 2$. Then $\Delta$ is
            Buchsbaum* over $k$ if and only if every connected
            component of $\Delta$ is Buchsbaum* over $k$ of
            dimension $d-1$.
       \end{itemize}}
   \end{remark}

   The following proposition provides equivalent versions of Definition \ref{def:Buch*}.

  \begin{proposition} \label{Isomorphism} 
     For a $(d-1)$-dimensional simplicial complex $\Delta$ which is Buchsbaum over 
     $k$, the following conditions are equivalent:
     \begin{itemize}
        \item[(i)]  $\Delta$ is Buchsbaum* over $k$.
       \item[(ii)]  For every $p \in |\Delta|$, the inclusion map $\iota : 
                    |\Delta| - p \hookrightarrow |\Delta|$ induces an injection
                    $$\iota_* : \widetilde{H}_{d-2}(|\Delta| - p;k) \rightarrow  
                    \widetilde{H}_{d-2}(|\Delta| ; k).$$
      \item[(iii)]  For every $p \in |\Delta|$, the inclusion map $\iota : 
                    |\Delta| - p \hookrightarrow |\Delta|$ induces an isomorphism 
                    $$\iota_* : \widetilde{H}_{d-2}(|\Delta| - p;k) \rightarrow  
                    \widetilde{H}_{d-2}(|\Delta| ; k).$$
        \item[(iv)] For every $p \in |\Delta|$, the canonical map
                    $$\rho_* : \widetilde{H}_{d-1}(|\Delta|;k) \rightarrow 
                    H_{d-1}(|\Delta|,|\Delta|-p;k)$$
                    is surjective.
     \end{itemize}
  \end{proposition}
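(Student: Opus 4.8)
The plan is to establish the cycle of implications (iv) $\Rightarrow$ (iii) $\Rightarrow$ (ii) $\Rightarrow$ (i) $\Rightarrow$ (iv), using throughout the long exact sequence of the pair $(|\Delta|, |\Delta|-p)$ in reduced singular homology with coefficients in $k$, together with the Buchsbaum hypothesis in the form of Theorem \ref{BuchsbaumCharacterization}(iii). The key point supplied by Buchsbaumness is that $\widetilde{H}_{d-1}(|\Delta|, |\Delta|-p; k)$ is the \emph{first} possibly nonzero relative group: since $\widetilde{H}_i(|\Delta|, |\Delta|-p; k) = 0$ for all $i < d-1$, and of course $\widetilde{H}_i(|\Delta|; k) = 0$ and $\widetilde{H}_i(|\Delta|-p;k)=0$ for $i \geq d$ (as $|\Delta|$ is $(d-1)$-dimensional and $|\Delta|-p$ is homotopy equivalent to a complex of dimension at most $d-1$, indeed an open subset of a $(d-1)$-complex), the relevant portion of the long exact sequence reduces to
\begin{equation} \label{eq:les}
  0 \longrightarrow \widetilde{H}_{d-1}(|\Delta|-p;k) \longrightarrow \widetilde{H}_{d-1}(|\Delta|;k) \stackrel{\rho_*}{\longrightarrow} \widetilde{H}_{d-1}(|\Delta|, |\Delta|-p;k) \stackrel{\partial}{\longrightarrow} \widetilde{H}_{d-2}(|\Delta|-p;k) \stackrel{\iota_*}{\longrightarrow} \widetilde{H}_{d-2}(|\Delta|;k).
\end{equation}

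From \eqref{eq:les} everything follows by elementary linear algebra over $k$. Exactness at $\widetilde{H}_{d-2}(|\Delta|-p;k)$ gives $\ker \iota_* = \operatorname{im} \partial$, and exactness at $\widetilde{H}_{d-1}(|\Delta|,|\Delta|-p;k)$ gives $\ker \partial = \operatorname{im} \rho_*$; hence $\operatorname{im}\partial \cong \widetilde{H}_{d-1}(|\Delta|,|\Delta|-p;k) / \operatorname{im}\rho_*$, and $\rho_*$ is surjective exactly when $\partial = 0$, i.e.\ exactly when $\iota_*$ is injective. This is the equivalence (ii) $\Leftrightarrow$ (iv). For (i) $\Leftrightarrow$ (ii): $\iota_*$ is injective iff $\ker\iota_* = 0$ iff $\dim_k \widetilde{H}_{d-2}(|\Delta|-p;k) = \dim_k \operatorname{im}\iota_* \leq \dim_k \widetilde{H}_{d-2}(|\Delta|;k)$, and since the same long exact sequence run with $|\Delta|$ replaced by nothing—more precisely, since $\operatorname{im}\iota_*$ sits inside $\widetilde{H}_{d-2}(|\Delta|;k)$—we always have $\dim_k \widetilde{H}_{d-2}(|\Delta|-p;k) \geq \dim_k \operatorname{im}\iota_*$ with equality iff $\iota_*$ is injective; combining with the observation that $\operatorname{im}\iota_* = \widetilde{H}_{d-2}(|\Delta|;k)$ whenever $\partial=0$ shows that the dimension equality \eqref{eq:Buch*} holds iff $\iota_*$ is injective iff $\iota_*$ is an isomorphism. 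That simultaneously yields (ii) $\Leftrightarrow$ (iii). Thus all four conditions are equivalent.

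The one genuinely non-formal input is the vanishing $\widetilde{H}_{d-1}(|\Delta|-p;k) \hookrightarrow \widetilde{H}_{d-1}(|\Delta|;k)$ being an injection at the left end of \eqref{eq:les}, together with the assertion that there are no higher relative or absolute homology groups to worry about. The first is immediate from exactness once we know $\widetilde{H}_{d-1}(|\Delta|,|\Delta|-p;k)$ receives nothing from $\widetilde{H}_d$, which holds because $\widetilde{H}_d(|\Delta|;k)=0$. The dimension bound for $|\Delta|-p$ requires a small argument: $|\Delta|-p$ deformation retracts onto a subcomplex of the barycentric subdivision of $\Delta$ (remove the open star of $p$ in a suitable subdivision), which has dimension at most $d-1$, so $\widetilde{H}_i(|\Delta|-p;k)=0$ for $i \geq d$. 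I expect this point — making precise that $|\Delta|-p$ has the homology of a $(d-1)$-dimensional complex, so that \eqref{eq:les} really does terminate where claimed — to be the only place demanding care; everything downstream is diagram-chasing and counting dimensions of $k$-vector spaces.
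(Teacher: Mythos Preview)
Your approach is the same as the paper's---the long exact sequence of the pair $(|\Delta|,|\Delta|-p)$, truncated using the Buchsbaum hypothesis---and the equivalences (ii) $\Leftrightarrow$ (iii) $\Leftrightarrow$ (iv) are handled correctly. There is, however, a small circularity in your treatment of (i) $\Rightarrow$ (ii). You write that ``$\operatorname{im}\iota_* = \widetilde{H}_{d-2}(|\Delta|;k)$ whenever $\partial = 0$'' and combine this with the dimension equality to conclude injectivity of $\iota_*$; but $\partial = 0$ is precisely condition (ii)/(iv), so you are assuming what you want to prove.

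The repair is already implicit in what you wrote at the outset: you noted that $\widetilde{H}_i(|\Delta|,|\Delta|-p;k)=0$ for all $i<d-1$, in particular for $i=d-2$. Extending your displayed sequence one more term therefore gives $\cdots \stackrel{\iota_*}{\longrightarrow} \widetilde{H}_{d-2}(|\Delta|;k) \longrightarrow 0$, so $\iota_*$ is \emph{always} surjective, not merely when $\partial=0$. With unconditional surjectivity in hand, (i) $\Leftrightarrow$ (ii) $\Leftrightarrow$ (iii) follows at once from rank--nullity over $k$. This is exactly how the paper proceeds: it writes the six-term exact sequence terminating in $0$, observes that $\iota_*$ is surjective, and then the remaining equivalences drop out. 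Your discussion of why the sequence begins with $0$ on the left (vanishing of homology above dimension $d-1$) is correct but not actually needed for any of the four equivalences.
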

  \begin{proof}
     Since $\Delta$ is Buchsbaum over $k$, we have $H_{d-2} (|\Delta|, 
     |\Delta|-p;k) = 0$ by condition (iii) of Theorem \ref{BuchsbaumCharacterization}. 
     Hence, the long exact sequence of the pair $(|\Delta|,|\Delta|-p)$ gives the exact 
     sequence

     \medskip
     
     $$
       \begindc{\commdiag}[3]
          \obj(0,50){$0$}
          \obj(25,50){$\widetilde{H}_{d-1} (|\Delta|-p;k)$}
          \obj(60,50){$\widetilde{H}_{d-1}(|\Delta|;k)$}
          \obj(98,50){$H_{d-1}(|\Delta|,|\Delta|-p;k)$}
          \mor(0,50)(13,50){$~$}
          \mor(38,50)(52,50){}
          \mor(69,50)(83,50){$\rho_*$}
          \obj(25,35){$\widetilde{H}_{d-2}(|\Delta|-p;k)$}
          \obj(60,35){$\widetilde{H}_{d-2}(|\Delta|;k)$}
          \obj(83,35){$0$.}
          \mor(38,35)(52,35){$\iota_*$}
          \mor(69,35)(83,35){$~$}
          \cmor((100,45)(100,43)(100,41)(97,41)(95,41)(90,41)(75,41)(55,41)(40,41)(35,41)(34,41)(33,41)(30,41)(30,39)(30,37)) 
          \pdown(60,43){$$}
       \enddc
     $$

     \medskip

     \noindent 
     It follows that $\iota_*$ is surjective. This proves that ${\rm (ii)} 
     \Leftrightarrow {\rm (iii)}$. Assuming that $\Delta$ is Buchsbaum* over $k$,  
     surjectivity of $\iota_*$ and (\ref{eq:Buch*}) imply that $\iota_*$ is an 
     isomorphism. This proves that ${\rm (i)} \Rightarrow {\rm (iii)}$. The reverse 
     implication is trivial. The same exact sequence proves the equivalence ${\rm (iii)} 
     \Leftrightarrow {\rm (iv)}$.  
  \end{proof}

  \begin{corollary} \label{NonVanishingTopHomology}
     If $\Delta$ is a $(d-1)$-dimensional Buchsbaum* simplicial complex over $k$, 
     then $$\widetilde{H}_{d-1}(\Delta;k) \ne 0.$$ 
  \end{corollary}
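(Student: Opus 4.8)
The plan is to reduce the claim to condition (iv) of Proposition~\ref{Isomorphism} and then to compute a single local homology group. Since $\Delta$ is Buchsbaum* over $k$, Proposition~\ref{Isomorphism} guarantees that for every $p \in |\Delta|$ the canonical map
$$\rho_* : \widetilde{H}_{d-1}(|\Delta|;k) \longrightarrow \widetilde{H}_{d-1}(|\Delta|,|\Delta|-p;k)$$
is surjective. Hence it is enough to exhibit a single point $p \in |\Delta|$ for which the relative group $\widetilde{H}_{d-1}(|\Delta|,|\Delta|-p;k)$ does not vanish: surjectivity of $\rho_*$ then immediately gives $\widetilde{H}_{d-1}(|\Delta|;k) \ne 0$, and this group is $\widetilde{H}_{d-1}(\Delta;k)$ since simplicial and singular homology agree.

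To produce such a point, I would use that $\Delta$, being Buchsbaum, is pure of dimension $d-1$ (Theorem~\ref{BuchsbaumCharacterization}), so it has a facet $\sigma$; let $p$ lie in the relative interior of $|\sigma|$. Because $\sigma$ is a maximal face, $p$ belongs to no face of $\Delta$ other than $\sigma$, so $U := |\Delta| \sm \bigcup_{\tau \ne \sigma} |\tau|$ is an open neighbourhood of $p$ in $|\Delta|$ contained in the relative interior of $|\sigma|$, and the latter is homeomorphic to $\RR^{d-1}$. Since $|\Delta| \sm U$ is closed and contained in the open set $|\Delta| - p$, excision gives
$$\widetilde{H}_{d-1}(|\Delta|,|\Delta|-p;k) \;\cong\; H_{d-1}(U,U-p;k) \;\cong\; H_{d-1}(\RR^{d-1},\RR^{d-1}\sm\{0\};k),$$
which is one-dimensional over $k$, hence nonzero. (Here $|\Delta|-p \ne \varnothing$, so reduced and unreduced relative homology coincide.)

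I do not expect a real obstacle: the argument is just a local computation fed into Proposition~\ref{Isomorphism}. The only points deserving care are the precise application of excision to the noncompact pair $(|\Delta|, |\Delta|-p)$ and the degenerate case $d = 1$, in which $|\Delta|$ is a finite discrete set; there the Buchsbaum* hypothesis already forces $|\Delta|$ to consist of more than one point, so $\widetilde{H}_0(\Delta;k) \ne 0$ without further work. One could equally well bypass part (iv) and read the conclusion directly off the long exact sequence of $(|\Delta|, |\Delta|-p)$ displayed in the proof of Proposition~\ref{Isomorphism}, using that $\iota_*$ is an isomorphism on $\widetilde H_{d-2}$ for Buchsbaum* complexes.
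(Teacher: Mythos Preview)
Your argument is correct and is essentially the paper's own proof: choose $p$ in the relative interior of a $(d-1)$-face, note that $\widetilde{H}_{d-1}(|\Delta|,|\Delta|-p;k)\cong k$, and invoke condition~(iv) of Proposition~\ref{Isomorphism}. You have simply spelled out the excision step and the trivial case $d=1$ in more detail than the paper does.
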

  \begin{proof}
     Let us choose $p \in |\Delta|$ in the relative interior of a $(d-1)$-dimensional 
     face of $\Delta$. Clearly, we have $H_{d-1}(|\Delta|, |\Delta|-p;k) 
     \cong k$. The desired statement follows by applying condition (iv) of Proposition 
     \ref{Isomorphism} to such a point $p$.  
  \end{proof}

  We note that part (i) of the next proposition fails if Buchsbaum* is replaced by 
  doubly Buchsbaum (see, for instance, part (i) of Example \ref{ex:counter}).

 \begin{proposition} \label{CMBuchsbaum}
     Let $\Delta$ be a simplicial complex.
     \begin{itemize}
        \item[(i)] Assume that $\Delta$ is Cohen-Macaulay over $k$. 
             Then $\Delta$ is Buchsbaum* over $k$ if and only if $\Delta$ is doubly 
             Cohen-Macaulay over $k$. 
        \item[(ii)] Assume that $\Delta$ is Gorenstein over $k$. 
             Then $\Delta$ is Buchsbaum* over $k$ if and only if $\Delta$ is Gorenstein*
             over $k$. 
     \end{itemize}
  \end{proposition}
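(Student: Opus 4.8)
The plan is to reduce both parts to the homological characterizations already available. For part (i), suppose first that $\Delta$ is Cohen-Macaulay of dimension $d-1$ over $k$. Since $\Delta$ is in particular Buchsbaum, Proposition \ref{Isomorphism} applies, and Buchsbaum* is equivalent to the condition that $\widetilde{H}_{d-2}(|\Delta|-p;k) \cong \widetilde{H}_{d-2}(|\Delta|;k)$ for all $p$. But Cohen-Macaulayness of $\Delta$ forces $\widetilde{H}_{d-2}(|\Delta|;k) = \widetilde{H}_{d-2}(\Delta;k) = 0$ (the reduced homology of a Cohen-Macaulay complex vanishes below the top dimension, by Reisner's criterion applied to $\sigma = \varnothing$). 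Hence the Buchsbaum* condition reduces precisely to $\widetilde{H}_{d-2}(|\Delta|-p;k) = 0$ for every $p \in |\Delta|$, which is exactly condition (ii) in Walker's Theorem \ref{Wa2-CM}. Thus $\Delta$ is Buchsbaum* if and only if it is doubly Cohen-Macaulay, as claimed.

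For part (ii), recall that $\Delta$ is Gorenstein* over $k$ if and only if $\Delta$ is Gorenstein over $k$ and $|\Delta|$ is a homology sphere over $k$ — equivalently, by Stanley's criterion, $\Delta$ is Gorenstein and the graph complement operation produces no "cone points", i.e. $\Delta$ is Gorenstein but not a cone. Since every Gorenstein complex is Cohen-Macaulay, part (i) already tells us that, for Gorenstein $\Delta$, being Buchsbaum* is equivalent to being doubly Cohen-Macaulay. So it suffices to show that a Gorenstein complex is doubly Cohen-Macaulay if and only if it is Gorenstein*. The implication Gorenstein* $\Rightarrow$ doubly Cohen-Macaulay is standard: a homology sphere is doubly Cohen-Macaulay over all fields (deleting a vertex leaves a homology ball, which is Cohen-Macaulay of full dimension), and this is recorded in \cite[p.~71]{Sta}. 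Conversely, if a Gorenstein complex $\Delta$ is doubly Cohen-Macaulay, then in particular it is not a cone (a cone with apex $v$ has $\Delta \sm v$ of strictly smaller dimension, or rather a cone is never doubly Cohen-Macaulay since the deletion of the apex drops it to a complex that is not full-dimensional Cohen-Macaulay unless it were a sphere to begin with); by Stanley's characterization of Gorenstein* complexes (a Gorenstein complex that is not a cone over any vertex is Gorenstein*), $\Delta$ is Gorenstein*.

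The main obstacle is bookkeeping rather than depth: one must be careful that the statement "$\widetilde{H}_{d-2}(|\Delta|;k)=0$ for Cohen-Macaulay $\Delta$" is invoked correctly (it follows from Reisner's criterion, and agrees with singular homology since $|\Delta|$ is the geometric realization), and that in part (ii) the equivalence "Gorenstein and not a cone $\iff$ Gorenstein*" is quoted in the right form, together with the elementary fact that a cone over a vertex is never doubly Cohen-Macaulay of the same dimension. Once these standard facts are in place, both equivalences drop out of part (i) and Walker's theorem with no further computation.
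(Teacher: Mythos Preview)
Your argument is correct. Part (i) is essentially the paper's proof verbatim: Cohen--Macaulayness forces $\widetilde{H}_{d-2}(\Delta;k)=0$, so the Buchsbaum* condition reduces to Walker's criterion in Theorem~\ref{Wa2-CM}.

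For part (ii) you take a slightly different route from the paper. The paper uses the characterization that a Gorenstein complex of dimension $d-1$ is Gorenstein* if and only if $\widetilde{H}_{d-1}(\Delta;k)\neq 0$, and then invokes Corollary~\ref{NonVanishingTopHomology} (Buchsbaum* implies nonvanishing top homology) for the forward implication; the reverse implication goes through doubly Cohen--Macaulay and part (i), just as in your argument. Your approach instead reduces everything to part (i) and then argues directly that, for Gorenstein complexes, doubly Cohen--Macaulay is equivalent to Gorenstein* via the ``not a cone'' characterization. This is equally valid and has the minor advantage of not needing Corollary~\ref{NonVanishingTopHomology}; the paper's route is a bit shorter because that corollary is already in hand. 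Two cosmetic points: the aside about a ``graph complement operation'' is extraneous and should be dropped, and the parenthetical that deleting a vertex from a homology sphere leaves a homology ball is not literally true in general, though what you actually need (that the deletion is Cohen--Macaulay of full dimension) is correct.
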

  \begin{proof}
    The assumption that $\Delta$ is Cohen-Macaulay over $k$ implies that $\Delta$ is 
    Buchsbaum over $k$ and that $\widetilde{H}_{d-2} (\Delta;k)=0$, where $d-1$ is the 
    dimension of $\Delta$. Therefore, under this assumption, Definition \ref{def:Buch*} 
    implies that $\Delta$ is Buchsbaum* over $k$ if and only if we have 
    $\widetilde{H}_{d-2} (\Delta-p;k)=0$ for every $p \in |\Delta|$. Thus, part (i) 
    follows from Theorem \ref{Wa2-CM}.

    Assume that $\Delta$ is Gorenstein over $k$. This assumption also implies that 
    $\Delta$ is Buchsbaum over $k$. A Gorenstein simplicial complex $\Gamma$ of 
    dimension $d-1$ is Gorenstein* if and only if $\widetilde{H}_{d-1}(\Gamma;k) \ne 0$. 
    Thus if $\Delta$ is Buchsbaum* over $k$, then $\Delta$ is Gorenstein* 
    over $k$ by Corollary \ref{NonVanishingTopHomology}. Conversely, if $\Delta$ is 
    Gorenstein* over $k$, then $\Delta$ is doubly Cohen-Macaulay over $k$ and hence 
    it is Buchsbaum* over $k$ by part (i). This proves part (ii).
  \end{proof}

  \begin{example} \label{ex:1dim}
    {\rm A zero-dimensional simplicial complex is Buchsbaum* over $k$ if and only if 
    it has at least two vertices. Suppose $\Delta$ is one-dimensional, so that $\Delta$ 
    is a graph. Then by Remark \ref{rem:components} (ii), $\Delta$ is Buchsbaum* over $k$ 
    if and only if so is each connected component of $\Delta$. Since a graph regarded as 
    a one-dimensional simplicial complex is Cohen-Macaulay over $k$ if and only if it is 
    connected, we conclude from Proposition \ref{CMBuchsbaum} (i) that $\Delta$ is 
    Buchsbaum* over $k$ if and only if each connected component of $\Delta$ is doubly 
    connected as a graph.
    \qed}
  \end{example}

  By the term \emph{homology manifold} (without further specification) in this 
  paper, we will always mean one without boundary.

  \begin{proposition} \label{Manifold}
    Let $\Delta$ be a triangulation of a homology manifold $X$ over $k$. Then 
    $\Delta$ is Buchsbaum* over $k$ if and only if $X$ is orientable over $k$.
  \end{proposition}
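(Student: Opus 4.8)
The plan is to derive the statement from part~(iv) of Proposition~\ref{Isomorphism} and from Corollary~\ref{NonVanishingTopHomology}, the only genuine input being classical Poincar\'e duality for homology manifolds. First I would record that $\Delta$ is Buchsbaum over $k$, so that Definition~\ref{def:Buch*} applies to it: since $X$ has no boundary, $\link_\Delta(\sigma)$ is a $k$-homology sphere, hence Cohen--Macaulay over $k$, for every $\sigma\in\Delta\sm\{\varnothing\}$, and $\Delta$ is pure, so Theorem~\ref{BuchsbaumCharacterization}~(ii) applies. Assuming $d\geq 2$, I would then use Remark~\ref{rem:components}~(ii), together with the fact that $X$ is orientable over $k$ precisely when each of its connected components is, to reduce to the case where $X$ is connected.

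The next step is to recall the local homology of $X$: for every $p\in|\Delta|=X$ one has $\widetilde{H}_i(|\Delta|,|\Delta|-p;k)\cong k$ for $i=d-1$ and $0$ otherwise. Indeed, if $\sigma\in\Delta$ is the carrier of $p$ (the face whose relative interior contains $p$), then by excision one may replace $|\Delta|$ by a neighbourhood of $p$, which through the local product structure of $|\Delta|$ realizes the pair as $(\RR^{\dim\sigma},\RR^{\dim\sigma}-0)\times(C,C-v)$, with $C$ the cone over $|\link_\Delta(\sigma)|$ and apex $v$; the relative K\"unneth formula over $k$ and the fact that $\link_\Delta(\sigma)$ is a $k$-homology $(d-2-\dim\sigma)$-sphere then give the claim. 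In particular $\widetilde{H}_{d-1}(|\Delta|,|\Delta|-p;k)$ is one-dimensional over $k$ for every $p$, so the canonical map $\rho_*$ of Proposition~\ref{Isomorphism}~(iv) is a map of $k$-vector spaces onto a line or the zero map.

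Now suppose $X$ is orientable over $k$. Then classical Poincar\'e duality for compact connected homology manifolds provides a fundamental class $[X]\in\widetilde{H}_{d-1}(|\Delta|;k)$ whose image under $\rho_*$ generates $\widetilde{H}_{d-1}(|\Delta|,|\Delta|-p;k)$ for every $p\in|\Delta|$; hence $\rho_*$ is surjective for every $p$, and $\Delta$ is Buchsbaum* over $k$ by Proposition~\ref{Isomorphism}~(iv). Conversely, if $X$ is not orientable over $k$, then (again a standard property of compact connected homology manifolds) $\widetilde{H}_{d-1}(|\Delta|;k)=H_{d-1}(X;k)=0$, so $\Delta$ cannot be Buchsbaum* over $k$ by Corollary~\ref{NonVanishingTopHomology}. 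This establishes both implications.

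Thus the combinatorial content of the proposition is carried entirely by Proposition~\ref{Isomorphism}~(iv) and Corollary~\ref{NonVanishingTopHomology}, and the one non-formal ingredient --- the place where the manifold hypothesis and the meaning of ``orientable over $k$'' are used --- is the Poincar\'e duality dichotomy: existence of a fundamental class restricting to a local generator at every point versus vanishing of top homology. I expect the only real obstacle to be ensuring that this classical theory, usually stated for topological manifolds, is invoked in the correct generality for homology manifolds over an arbitrary field (noting, in particular, that in characteristic $2$ every such $X$ is orientable over $k$); everything else is formal.
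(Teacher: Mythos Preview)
Your argument is correct and follows essentially the same route as the paper's: reduce to the connected case, use that the local homology in top degree is one-dimensional, and then appeal to Proposition~\ref{Isomorphism} together with the dichotomy ``orientable $\Leftrightarrow$ top homology nonzero'' for connected homology manifolds. The only cosmetic difference is that the paper works with the characterization~(iii) of Proposition~\ref{Isomorphism} (showing $\iota_*$ is an isomorphism via the additional observation $\widetilde{H}_{d-1}(|\Delta|-p;k)=0$) and reverses the exact-sequence argument for the converse, whereas you use characterization~(iv) with the fundamental class and invoke Corollary~\ref{NonVanishingTopHomology} for the contrapositive.
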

  \begin{proof}
    In view of Remark \ref{rem:components} (ii), we may assume that $|\Delta|$ is 
    connected. Let $d-1$ be the dimension of $\Delta$ and let $p \in |\Delta|$. Our 
    assumptions imply that $\Delta$ is Buchsbaum over $k$, that $\widetilde{H}_{d-1}
    (|\Delta|-p;k) = 0$ and 
    that $H_{d-1} (|\Delta|,|\Delta|-p;k) \cong k$. Thus, the long 
    exact homology sequence considered in the proof of Proposition \ref{Isomorphism} 
    shows that the canonical map $\rho_* : \widetilde{H}_{d-1}(|\Delta|;k) \rightarrow 
    H_{d-1}(|\Delta|,|\Delta|-p;k)$ is surjective if and only if 
    $\widetilde{H}_{d-1} (|\Delta|;k) \cong k$. Since the latter holds if and only if 
    $X$ is orientable over $k$, the proof follows from the equivalence  ${\rm (i)} 
    \Leftrightarrow {\rm (iv)}$ in Proposition \ref{Isomorphism}. 
  \end{proof} 

  The following proposition provides another equivalent version of Definition 
  \ref{def:Buch*}. The condition in part (ii) of this proposition is a stronger version 
  of one which appeared in \cite{Mi} (see also the proof of Corollary \ref{MainTheorem} 
  below). Recall that the \emph{contrastar} of a face $\sigma$ of a simplicial complex 
  $\Delta$ is defined as the subcomplex $\cost_\Delta(\sigma) = \{\tau \in \Delta: \sigma 
  \not \subseteq \tau\}$ of $\Delta$. 

  \begin{proposition} \label{Local}
     For a $(d-1)$-dimensional simplicial complex $\Delta$ which is Buchsbaum over
     $k$, the following conditions are equivalent:
     \begin{itemize}
        \item[(i)]  $\Delta$ is Buchsbaum* over $k$.
        
       \item[(ii)]  For every pair of faces $\sigma \subseteq \tau$ of $\Delta$,
          the map 
          \begin{equation} \label{costarmaps} 
            j_* : H_{d-1} (\Delta, \cost_\Delta(\sigma);k) 
            \rightarrow H_{d-1} (\Delta, \cost_\Delta(\tau);k),    
          \end{equation}      
          induced by inclusion, is surjective.
     \end{itemize}
  \end{proposition}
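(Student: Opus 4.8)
The plan is to establish the cycle of implications (i) $\Leftrightarrow$ (ii) and (i) $\Leftrightarrow$ (iii), using for the first the identification of local homology of a geometric realization with relative simplicial homology against contrastars, and for the second the Gräbe-type description of the local cohomology of $k[\Delta]$ in terms of simplicial cohomology of links. First I would recall the standard fact (excision, or the argument behind Theorem \ref{BuchsbaumCharacterization}(iii)) that for a point $p$ in the relative interior of a face $\sigma$ of $\Delta$ one has a natural isomorphism $\widetilde{H}_i(|\Delta|,|\Delta|-p;k) \cong \widetilde{H}_i(\Delta,\cost_\Delta(\sigma);k)$, compatibly with the maps induced by moving $p$ within $|\Delta|$: if $p$ lies in the relative interior of $\sigma$ and $q$ lies in the relative interior of a face $\tau \supseteq \sigma$ obtained by degenerating $p$ to the boundary, the inclusion $|\Delta|-p \hookrightarrow |\Delta|-q$ corresponds to the inclusion $\cost_\Delta(\tau) \hookrightarrow \cost_\Delta(\sigma)$. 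Granting this, condition (iv) of Proposition \ref{Isomorphism} — surjectivity of $\rho_* : \widetilde H_{d-1}(|\Delta|;k) \to \widetilde H_{d-1}(|\Delta|,|\Delta|-p;k)$ for all $p$ — translates directly into surjectivity of the composite $\widetilde H_{d-1}(\Delta;k) \to \widetilde H_{d-1}(\Delta,\cost_\Delta(\sigma);k)$. Then the equivalence (i) $\Leftrightarrow$ (ii) follows by a diagram chase: the map $j_*$ in \eqref{costarmaps} fits into a commuting triangle with the two canonical maps out of $\widetilde H_{d-1}(\Delta;k)$, so $j_*$ is surjective for all $\sigma \subseteq \tau$ exactly when each canonical map $\widetilde H_{d-1}(\Delta;k) \to \widetilde H_{d-1}(\Delta,\cost_\Delta(\tau);k)$ is surjective (taking $\sigma = \varnothing$, where $\cost_\Delta(\varnothing) = \varnothing$ and the source of $j_*$ is all of $\widetilde H_{d-1}(\Delta;k)$, gives one direction; the triangle gives the other).

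For (i) $\Leftrightarrow$ (iii) I would invoke the formula of Gräbe (equivalently, Hochster-type/Reisner-type formulas for Buchsbaum rings, as in \cite[Chapter II]{SV} or \cite{Sch}) expressing the graded pieces of $H^d(k[\Delta])$ — the top local cohomology with respect to the irrelevant ideal $\mathfrak m$ — in terms of the reduced simplicial (co)homology of $\Delta$ and of the contrastars $\cost_\Delta(\sigma)$ of its faces, together with the natural maps between them coming from inclusions $\cost_\Delta(\tau) \hookrightarrow \cost_\Delta(\sigma)$ for $\sigma \subseteq \tau$. Concretely, in the relevant degrees the socle condition $\big(\Socle\, H^d(k[\Delta])\big)_i = 0$ for $i \neq 0$ becomes, after dualizing, precisely the statement that the maps $\widetilde H_{d-1}(\Delta,\cost_\Delta(\sigma);k) \to \widetilde H_{d-1}(\Delta,\cost_\Delta(\tau);k)$ have no ``new'' cokernel contributions in nonzero internal degree — i.e.\ that the maps $j_*$ of \eqref{costarmaps} are surjective. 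This makes (iii) $\Leftrightarrow$ (ii) essentially a matter of unwinding the grading conventions and the duality between local cohomology and the simplicial cochain description.

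The main obstacle I expect is pinning down (iii) precisely: one must get the internal grading right, match the socle of $H^d(k[\Delta])$ in degree $i$ with the correct relative homology group of the pair $(\Delta,\cost_\Delta(\sigma))$ for faces $\sigma$ of the appropriate cardinality, and verify that the $\mathfrak m$-module structure maps correspond under this dictionary to the inclusion-induced maps $j_*$. This is where the bookkeeping in \cite{SV}, \cite{Sch}, or Gräbe's description must be quoted carefully; the purely topological equivalence (i) $\Leftrightarrow$ (ii) is comparatively routine given Proposition \ref{Isomorphism} and the local-homology/contrastar identification. I would therefore spend the bulk of the write-up on the algebraic translation in (iii), and dispatch (i) $\Leftrightarrow$ (ii) quickly via the commuting triangle.
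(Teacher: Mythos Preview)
Your plan for (i) $\Leftrightarrow$ (ii) is exactly the paper's: identify $\widetilde H_{d-1}(|\Delta|,|\Delta|-p;k)$ with $\widetilde H_{d-1}(\Delta,\cost_\Delta(\tau);k)$ via the deformation retraction onto the contrastar, then use the commuting triangle of the maps $\rho_*^\sigma$, $\rho_*^\tau$, $j_*$ together with the case $\sigma=\varnothing$. One small slip to fix in your write-up: for $\sigma\subseteq\tau$ the inclusion of contrastars goes $\cost_\Delta(\sigma)\hookrightarrow\cost_\Delta(\tau)$, not the other way (if $\sigma\not\subseteq\rho$ then certainly $\tau\not\subseteq\rho$), and your ``degenerating $p$ to the boundary'' picture has the face containment reversed as well---but this does not affect the triangle argument.

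For (ii) $\Leftrightarrow$ (iii) the paper does not carry out the bookkeeping you anticipate; it simply observes (following \cite[Section 2]{NS}) that the equivalence is a consequence of Gr\"abe's description \cite[Theorem 2]{Gra} of the $k[\Delta]$-module structure on $H^d(k[\Delta])$. So your instinct to invoke Gr\"abe is right, but you can cite rather than unwind: the translation between the socle vanishing in nonzero degrees and surjectivity of the maps $j_*$ is already in the literature.
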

  \begin{proof}
    Recall that for $p \in |\Delta|$ there is a deformation retraction of $|\Delta|-p$
    onto $|\cost_\Delta(\tau)|$, where $\tau$ is the unique face of $\Delta$ such that 
    $p$ lies in the relative interior of $|\tau|$. As a result, condition (iv) of 
    Proposition \ref{Isomorphism} is equivalent to the condition that for each $\tau \in 
    \Delta$, the canonical map
    $$\rho_*^\tau : \widetilde{H}_{d-1}(\Delta;k) \rightarrow H_{d-1} (\Delta, 
    \cost_\Delta(\tau);k)$$
    is surjective. The commutative diagram of canonical maps
    
    \medskip

     $$
       \begindc{\commdiag}[3]
        \obj(30,30){$\widetilde{H}_{d-1}(\Delta;k)$}
        \obj(85,10){$H_{d-1} (\Delta, \cost_\Delta(\tau);k)$} 
        \obj(85,50){$H_{d-1} (\Delta, \cost_\Delta(\sigma);k)$} 
        \mor(38,28)(68,10){$\rho_*^\tau$}
        \mor(38,32)(68,50){$\rho_*^\sigma$}
        \mor(85,48)(85,12){$j_*$}
      \enddc
    $$

    \medskip

    \noindent 
    for pairs $\sigma \subseteq \tau$ of faces of $\Delta$ shows that the latter condition 
    is equivalent to (ii). 
  \end{proof}

  \begin{corollary} \label{MainTheorem}
     Every Buchsbaum* complex over $k$ is doubly Buchsbaum over $k$. 
  \end{corollary}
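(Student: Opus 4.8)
The plan is to prove directly that for every vertex $v$ of $\Delta$ the deletion $\Delta \sm v = \cost_\Delta(v)$ is Buchsbaum over $k$ of dimension $d-1$; this is exactly the assertion that $\Delta$ is doubly Buchsbaum. The argument hinges on a lemma that I would isolate and prove first: \emph{if $\Delta$ is Buchsbaum* over $k$, then $\link_\Delta(\tau)$ is doubly Cohen--Macaulay over $k$ for every nonempty face $\tau$ of $\Delta$}. Since $\Delta$ is Buchsbaum, the complex $L := \link_\Delta(\tau)$ is Cohen--Macaulay over $k$ (Theorem \ref{BuchsbaumCharacterization}), hence Buchsbaum over $k$, so by Proposition \ref{CMBuchsbaum}(i) it is enough to show that $L$ is Buchsbaum* over $k$. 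By Proposition \ref{Local} this amounts to proving that, for every pair of faces $\rho \subseteq \rho'$ of $L$, the map $\widetilde{H}_{\dim L}(L, \cost_L(\rho);k) \to \widetilde{H}_{\dim L}(L, \cost_L(\rho');k)$ induced by inclusion is surjective. Here I would use the excision isomorphisms $\widetilde{H}_{d-1}(\Delta, \cost_\Delta(\tau \cup \rho);k) \cong \widetilde{H}_{d-1-|\tau|-|\rho|}(\link_\Delta(\tau \cup \rho);k) \cong \widetilde{H}_{\dim L}(L, \cost_L(\rho);k)$, valid because $\tau \cup \rho \in \Delta$ and $\link_\Delta(\tau \cup \rho) = \link_L(\rho)$, and the point is that these identifications are natural with respect to the inclusions $\cost_\Delta(\tau \cup \rho) \subseteq \cost_\Delta(\tau \cup \rho')$ coming from $\rho \subseteq \rho'$. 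Since $\Delta$ is Buchsbaum*, Proposition \ref{Local}(ii) gives surjectivity of the maps $j_*$ for \emph{all} pairs of faces of $\Delta$, in particular for all pairs $\tau \cup \rho \subseteq \tau \cup \rho'$; transporting this along the above isomorphisms yields the surjectivity needed for $L$, and the lemma follows.

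Granting the lemma, fix a vertex $v$ and put $\Gamma := \Delta \sm v = \cost_\Delta(v)$. First, $\dim \Gamma = d-1$: otherwise every facet of $\Delta$ would contain $v$, so $\Delta$ would be a cone with apex $v$ and hence contractible, contradicting $\widetilde{H}_{d-1}(\Delta;k) \neq 0$ (Corollary \ref{NonVanishingTopHomology}). Now take any $p \in |\Gamma|$, say in the relative interior of the (necessarily nonempty) face $\rho$ of $\Gamma$, so $v \notin \rho$. Using the deformation retraction $|\Gamma| - p \simeq |\cost_\Gamma(\rho)|$ together with excision one gets $\widetilde{H}_i(|\Gamma|, |\Gamma| - p;k) \cong \widetilde{H}_{i-|\rho|}(\link_\Gamma(\rho);k)$, and a direct check shows $\link_\Gamma(\rho) = \link_\Delta(\rho) \sm v$. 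By the lemma, $\link_\Delta(\rho)$ is doubly Cohen--Macaulay over $k$, so $\link_\Delta(\rho) \sm v$ is Cohen--Macaulay over $k$ of dimension $\dim \link_\Delta(\rho) = d-1-|\rho|$ (when $v$ is not a vertex of $\link_\Delta(\rho)$ the deletion changes nothing and $\link_\Delta(\rho)$ is itself Cohen--Macaulay because $\Delta$ is Buchsbaum). Hence $\widetilde{H}_{i-|\rho|}(\link_\Gamma(\rho);k) = 0$ whenever $i-|\rho| < d-1-|\rho|$, i.e.\ whenever $i < d-1$. Therefore condition (iii) of Theorem \ref{BuchsbaumCharacterization} holds for the $(d-1)$-dimensional complex $\Gamma$, so $\Gamma$ is Buchsbaum over $k$ of dimension $d-1$. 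Since $v$ was arbitrary, $\Delta$ is doubly Buchsbaum over $k$.

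The step I expect to require the most care is the naturality claim inside the lemma — that the excision isomorphisms $\widetilde{H}_{d-1}(\Delta, \cost_\Delta(\tau \cup \rho);k) \cong \widetilde{H}_{\dim L}(L, \cost_L(\rho);k)$ are compatible with the maps induced by face inclusions $\rho \subseteq \rho'$, so that surjectivity really does transfer from $\Delta$ to $L$. I would settle this at the level of relative simplicial chain complexes: $C_*(\Delta)/C_*(\cost_\Delta(\alpha))$ has as a basis the faces of $\Delta$ containing $\alpha$, the canonical surjection onto $C_*(\Delta)/C_*(\cost_\Delta(\alpha'))$ (for $\alpha \subseteq \alpha'$) just discards the faces not containing $\alpha'$, and under the bijection between faces of $\Delta$ containing $\tau$ and faces of $L$ this matches, up to the uniform degree shift by $|\tau|$ (and an immaterial overall sign), the corresponding surjection $C_*(L)/C_*(\cost_L(\rho)) \to C_*(L)/C_*(\cost_L(\rho'))$. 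Everything else is routine: bookkeeping with dimensions, the long exact sequence of a pair, and the characterizations recalled in Theorem \ref{BuchsbaumCharacterization} and Propositions \ref{CMBuchsbaum} and \ref{Local}.
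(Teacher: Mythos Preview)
Your argument is correct, but it takes a genuinely different route from the paper's. The paper dispatches the corollary in one line by quoting an external characterization due to Miyazaki \cite[Theorem 4.3]{Mi}: a $(d-1)$-dimensional complex is doubly Buchsbaum over $k$ if and only if it is Buchsbaum over $k$ and the maps \eqref{costarmaps} are surjective for every pair of \emph{nonempty} faces $\sigma \subseteq \tau$. Since Proposition~\ref{Local}(ii) gives surjectivity for \emph{all} pairs of faces (including $\sigma = \varnothing$), the conclusion is immediate.

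Your approach is more self-contained: you verify the definition of doubly Buchsbaum directly, by showing each $\Delta \sm v$ is Buchsbaum of the right dimension via Schenzel's local criterion. The key ingredient is your lemma that links of nonempty faces in a Buchsbaum* complex are doubly Cohen--Macaulay. This is exactly the paper's Corollary~\ref{Links}, which the paper deduces \emph{from} the present corollary together with another result of Miyazaki \cite[Lemma 4.2]{Mi}; you instead prove it independently by transporting the surjectivity in Proposition~\ref{Local}(ii) down to the link through the natural isomorphisms $\widetilde{H}_{d-1}(\Delta,\cost_\Delta(\tau\cup\rho);k)\cong \widetilde{H}_{\dim L}(L,\cost_L(\rho);k)$, and then invoking Proposition~\ref{CMBuchsbaum}(i). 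In effect you have reversed the logical order between Corollaries~\ref{MainTheorem} and~\ref{Links}. The payoff is that your argument avoids any appeal to \cite{Mi}; the cost is length, and the naturality verification you flag at the end, while routine at the level of relative simplicial chains (as you outline), does need to be written out carefully.
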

  \begin{proof}
     This statement follows from the implication ${\rm (i)} \Rightarrow {\rm (ii)}$
     of Proposition \ref{Local} and the fact (see \cite[Theorem 4.3]{Mi}) that a 
     $(d-1)$-dimensional simplicial complex $\Delta$ is doubly Buchsbaum over $k$ if 
     and only if $\Delta$ is Buchsbaum over $k$ and the map (\ref{costarmaps}) is 
     surjective for every pair of nonempty faces $\sigma \subseteq \tau$ of $\Delta$. 
  \end{proof}

  The following property of Buchsbaum* complexes was proved for connected orientable 
  homology manifolds in \cite[Theorem 2.1]{NS}. 
  \begin{corollary} \label{cor:local}
    Let $\Delta$ be a $(d-1)$-dimensional Buchsbaum* simplicial complex over $k$. 
    Then the socle of the local cohomology module of $k[\Delta]$ in homological 
    dimension $d$ with respect to the irrelevant ideal satisfies $\Big( {\rm Soc} \, 
    H^d (k[\Delta])\Big)_i = 0$ for all $i \ne 0$.
  \end{corollary}
  \begin{proof}
    As noted in the proof of \cite[Theorem 2.1]{NS}, it follows from \cite[Theorem 
    2]{Gra} that the conclusion of the corollary holds if the map 
      $$ j^* : H^{d-1} (\Delta, \cost_\Delta(\tau);k) 
            \rightarrow H^{d-1} (\Delta, \cost_\Delta(\tau \sm \{l\});k) $$  
    in simplicial cohomology, induced by the identity map, is injective for every 
    $\tau \in \Delta \sm \{ \varnothing \}$ and $l \in \tau$. This holds if 
    $\Delta$ is Buchsbaum* over $k$ by Proposition \ref{Local}.
  \end{proof}

  \begin{example} \label{ex:counter}
    {\rm Some examples of doubly Buchsbaum complexes which are not Buchsbaum* are 
    the following.
    
    \begin{itemize} 
      \item[(i)] The one-dimensional simplicial complex $\Delta$ on the vertex set $\{a, b, 
        c, d, p\}$ with facets (edges) $\{p, a\}$, $\{p, b\}$, $\{a, b\}$, $\{p, c\}$, 
        $\{p, d\}$, $\{c, d\}$ is doubly Buchsbaum but not Buchsbaum*, since $\widetilde{H}_0 
        (|\Delta|;k) = 0$ and $\widetilde{H}_0 (|\Delta| - p;k) \cong k$ (alternatively,
        since $\Delta$ is not doubly connected as a graph).

    \item[(ii)] Condition (ii) of Theorem \ref{BuchsbaumCharacterization} and the fact that
        all homology spheres over $k$ are doubly Cohen-Macaulay over $k$ imply that 
        all homology manifolds over $k$ are doubly Buchsbaum over $k$. This fact and 
        Proposition \ref{Manifold} imply that every non-orientable homology manifold over 
        $k$ is doubly Buchsbaum but not Buchsbaum* over $k$. 

    \item[(iii)] Let $\Gamma$ be a triangulation of the two-dimensional torus for which
        some three edges of $\Gamma$ of the form $\{a, b\}$, $\{b, c\}$ and $\{c, a\}$ 
        are the support of a 1-cycle which represents an element of a basis of 
        $\widetilde{H}_1 (\Gamma;k)$. Let $\Delta$ be the simplicial complex obtained 
        from $\Gamma$ by adding the two-dimensional face $\sigma = \{a, b, c\}$. It is 
        easy to check that $\Delta$ is doubly Buchsbaum over all fields $k$. However, 
        since $\widetilde{H}_1 (\Delta;k) \cong k$ and $\widetilde{H}_1 (|\Delta| - p;k) 
        \cong k^2$ for every point $p$ in the relative interior of $|\sigma|$, the 
        complex $\Delta$ is not Buchsbaum* over $k$.    
    \end{itemize}
    }
  \end{example}

  \begin{corollary} \label{Links}
     If $\Delta$ is a Buchsbaum* simplicial complex over $k$, then $\link_\Delta(\sigma)$ 
     is doubly Cohen-Macaulay over $k$ for every nonempty face $\sigma$ of $\Delta$. 
  \end{corollary}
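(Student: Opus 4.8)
The plan is to prove that $L := \link_\Delta(\sigma)$ is Buchsbaum* over $k$, and then to invoke Proposition \ref{CMBuchsbaum}(i). Set $e := d - |\sigma|$, so $\dim L = e-1$. Since $\Delta$ is Buchsbaum, $L$ is Cohen--Macaulay, hence Buchsbaum, over $k$ by Theorem \ref{BuchsbaumCharacterization}(ii); thus Proposition \ref{CMBuchsbaum}(i) will indeed apply once we know that $L$ is Buchsbaum*. The hypothesis $\sigma \neq \varnothing$ enters through the bookkeeping identities $\link_L(\rho) = \link_\Delta(\sigma \cup \rho)$ and $|\sigma \cup \rho| = |\sigma| + |\rho|$, valid for any face $\rho$ of $L$ (which is automatically disjoint from $\sigma$), together with the observation that $\sigma \cup \rho$ is then a \emph{nonempty} face of $\Delta$.

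The main tool I would use is the standard natural isomorphism $\widetilde{H}_i(\Gamma, \cost_\Gamma(\mu);k) \cong \widetilde{H}_{i-|\mu|}(\link_\Gamma(\mu);k)$, valid for any simplicial complex $\Gamma$ and any face $\mu \in \Gamma$. It is obtained by excising the closed star of $\mu$, using that the closed star $\overline{\mu}*\link_\Gamma(\mu)$ is contractible, and identifying the intersection of the closed star with $\cost_\Gamma(\mu)$ with the $(|\mu|-1)$-fold suspension of $\link_\Gamma(\mu)$. What is actually needed is the compatibility of these isomorphisms, for faces $\mu \subseteq \nu$ of $\Gamma$, with the inclusion-induced map $\widetilde{H}_i(\Gamma, \cost_\Gamma(\mu);k) \to \widetilde{H}_i(\Gamma, \cost_\Gamma(\nu);k)$ and with the formation of the link of $\nu \sm \mu$ inside $\link_\Gamma(\mu)$ (so that the isomorphism for $(\Gamma,\nu)$ factors through those for $(\Gamma,\mu)$ and for $(\link_\Gamma(\mu),\nu\sm\mu)$). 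This is the same diagram chase that is already implicit in the proof of Proposition \ref{Local}, carried one level further.

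Granting this, the argument runs as follows. As $L$ is Buchsbaum, Proposition \ref{Isomorphism}(iv), combined with the deformation retraction of $|L|-q$ onto $|\cost_L(\rho)|$ used in the proof of Proposition \ref{Local}, shows that $L$ is Buchsbaum* if and only if, for every face $\rho$ of $L$, the canonical map $\widetilde{H}_{e-1}(L;k) \to \widetilde{H}_{e-1}(L, \cost_L(\rho);k)$ is surjective. Applying the natural isomorphism to $\Gamma = L$ with $\mu = \rho$, and to $\Gamma = \Delta$ with $\mu = \sigma$ and with $\mu = \sigma \cup \rho$, this map is carried to the inclusion-induced map $\widetilde{H}_{d-1}(\Delta, \cost_\Delta(\sigma);k) \to \widetilde{H}_{d-1}(\Delta, \cost_\Delta(\sigma \cup \rho);k)$. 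The latter is surjective by Proposition \ref{Local}(ii), applied to $\Delta$ with the pair of nonempty faces $\sigma \subseteq \sigma \cup \rho$. Hence $L$ is Buchsbaum*, and therefore doubly Cohen--Macaulay, over $k$.

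The step I expect to be the main obstacle is the naturality assertion in the second paragraph: one must make the excision/suspension isomorphism explicit enough to verify that it intertwines the inclusion-induced maps on $\Delta$ with the canonical maps on $L = \link_\Delta(\sigma)$, and in particular that passing to the link of $\rho$ inside $L$ corresponds, at the level of these homology groups and maps, to passing to the link of $\sigma \cup \rho$ inside $\Delta$. Everything else is formal, given Theorem \ref{BuchsbaumCharacterization} and Propositions \ref{Isomorphism}, \ref{CMBuchsbaum}, and \ref{Local}.
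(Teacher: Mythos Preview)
Your argument is correct but differs from the paper's. The paper's proof is a two-line citation: Buchsbaum* implies doubly Buchsbaum by Corollary \ref{MainTheorem}, and then Miyazaki's result \cite[Lemma 4.2]{Mi} says that links of nonempty faces in a doubly Buchsbaum complex are doubly Cohen--Macaulay. No homological computation is done; the work is outsourced to Miyazaki.

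Your route avoids Miyazaki's lemma entirely. You show directly that $L=\link_\Delta(\sigma)$ is Buchsbaum* by transporting the surjectivity condition of Proposition \ref{Local}(ii) from $\Delta$ down to $L$ via the natural isomorphisms $\widetilde{H}_{d-1}(\Delta,\cost_\Delta(\mu);k)\cong\widetilde{H}_{d-1-|\mu|}(\link_\Delta(\mu);k)$, and then conclude with Proposition \ref{CMBuchsbaum}(i). This is more self-contained: it stays inside the paper's own framework plus a standard naturality check, and it yields along the way the slightly sharper observation that the link is itself Buchsbaum* (equivalent here, since the link is Cohen--Macaulay, but conceptually cleaner). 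The cost is exactly the naturality verification you flag as the main obstacle; that step is routine but does require writing down the excision/suspension isomorphisms carefully enough to see that they intertwine the inclusion-induced maps. The paper's approach trades that verification for an appeal to the literature.
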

  \begin{proof}
     This statement follows from Corollary \ref{MainTheorem} and the fact (see, for
     instance, \cite[Lemma 4.2]{Mi}) that the link of any nonempty face in a doubly 
     Buchsbaum complex is doubly Cohen-Macaulay. 
  \end{proof}

    \section{Constructions}
  \label{sec:examples}

  This section investigates the behavior of the Buchsbaum* property under taking products, 
  joins and skeleta of simplicial complexes, studies a notion of higher Buchsbaum* 
  connectivity and shows that a large family of Buchsbaum* complexes can be constructed by 
  gluing orientable homology manifolds with boundary to an orientable homology manifold 
  without boundary in a suitable way.   

  \subsection{Products}

  This section shows that the Buchsbaum and Buchsbaum* properties are 
  preserved under direct products of simplicial complexes. We note that the 
  corresponding statement fails for both the Cohen-Macaulay and doubly 
  Cohen-Macaulay properties. 

  \begin{proposition} \label{Product}
     Let $\Gamma$ be a $(d-1)$-dimensional simplicial complex and $\Delta$ be an 
     $(e-1)$-dimensional simplicial complex. 
     \begin{itemize}
       \item[(i)] 
         If $\Gamma$ and $\Delta$ are Buchsbaum over $k$, 
         then every simplicial complex triangulating 
         $|\Gamma| \times |\Delta|$ is Buchsbaum over $k$.
       \item[(ii)] 
         If $\Gamma$ and $\Delta$ are Buchsbaum* over $k$, 
         then every simplicial complex triangulating 
         $|\Gamma| \times |\Delta|$ is Buchsbaum* over $k$.
     \end{itemize}
  \end{proposition}
  \begin{proof}
     Let $p \in |\Gamma| \times |\Delta|$. There are unique faces
     $\sigma \in \Gamma$ and $\tau \in \Delta$ such that $p$ lies in the 
     relative interior of $|\sigma| \times |\tau|$. Then 
     $|\cost_\Gamma(\sigma)| \times |\Delta| \cup 
     |\Gamma| \times |\cost_\Delta(\tau)|$
     is a deformation retract of $|\Gamma| \times |\Delta| - p$ and hence
     \begin{eqnarray} \label{def}
       (|\Gamma|, |\cost_\Gamma(\sigma)|) \times 
       (|\Delta|,|\cost_\Delta(\tau)|)
       & \simeq & (|\Gamma| \times |\Delta|, |\Gamma| \times |\Delta| -p)
     \end{eqnarray}
     is a deformation retraction.
     \begin{itemize}
       \item[(i)] 
         By (\ref{def}) and the K\"unneth formula we have:
         $$H_i (|\Gamma| \times |\Delta|, |\Gamma| \times |\Delta| -p;k)
                            \cong
           \bigoplus_{j=0}^i 
           \begin{array}{c} 
             H_j (\Gamma, \cost_\Gamma(\sigma);k) \\ 
             \otimes \\
             H_{i-j}(\Delta, \cost_\Delta(\tau);k). 
           \end{array}
         $$  
         For $i < d+e-2$ we have either $j < d-1$ or $i-j < e-1$.
         Thus by Buchsbaumness of $\Gamma$ and $\Delta$, 
         either $H_j (\Gamma, \cost_\Gamma(\sigma);k) = 0$
         or $H_{i-j}(\Delta, \cost_\Delta(\tau);k) = 0$. 
         Thus 
         $$H_i (|\Gamma| \times |\Delta|, |\Gamma| \times |\Delta| -p;k) = 0$$ 
         for $i < d+e-2$. 
         Hence every simplicial complex triangulating $|\Gamma| \times |\Delta|$
         is Buchsbaum over $k$.
       \item[(ii)] 
         By (\ref{def}), the map 
         $$
           \begindc{\commdiag}[3]
             \obj(40,30){$\widetilde{H}_{d+e-2}(|\Gamma|\times |\Delta|;k)$}
             \obj(105,30){$H_{d+e-2}(|\Gamma| \times |\Delta|, 
                                              |\Gamma|\times |\Delta|-p;k)$}
             \mor(55,30)(77,30){$\rho_*$}
           \enddc
         $$

         from the exact sequence of the pair $(|\Gamma| \times |\Delta|,
                                              |\Gamma|\times |\Delta|-p)$
         equals the map 
         $$
         \begindc{\commdiag}[3]
           \obj(40,30){$\widetilde{H}_{d+e-2} (|\Gamma|\times |\Delta|;k)$}
           \obj(80,30){$H_{d+e-2}\Big($}
           \obj(102,34){$(|\Gamma|,|\cost_\Gamma(\sigma)|)$}
           \obj(95,30){$\times$}
           \obj(102,26){$(|\Delta|,|\cost_\Delta(\tau)|)$}
           \obj(120,30){$;k\Big)$.}
           \mor(55,30)(75,30){$\rho_*$}
        \enddc
         $$

         In turn, by the K\"unneth formula, this map can be written as
         $$
         \begindc{\commdiag}[3]
           \obj(40,30){$\widetilde{H}_{d-1} (\Gamma;k) \otimes \widetilde{H}_{e-1} (\Delta;k)$}
           \obj(115,30){$H_{d-1}(\Gamma,\cost_\Gamma(\sigma);k) \otimes 
           H_{e-1}(\Delta,\cost_\Delta(\tau);k)$.}
           \mor(60,30)(80,30){$\rho_*$}
        \enddc
        $$

        From the fact that $\Gamma$ is Buchsbaum* over $k$ and Proposition \ref{Isomorphism} 
        we deduce that the projection from 
        $\widetilde{H}_{d-1}(\Gamma;k)$ to 
        $H_{d-1}(\Gamma,\cost_\Gamma(\sigma);k)$ is surjective.
        Analogously, the projection from 
        $\widetilde{H}_{e-1}(\Delta;k)$ to 
        $H_{e-1}(\Delta,\cost_\Delta(\tau);k)$
        is surjective. Hence $\rho_*$ is surjective as well. This fact and Proposition 
        \ref{Isomorphism} imply that every triangulation of $|\Gamma|\times |\Delta|$ 
        is Buchsbaum* over $k$.
     \end{itemize}
  \end{proof}

  \subsection{Joins}

  We recall that the join $\Gamma * \Delta$ of two simplicial complexes $\Gamma$ and 
  $\Delta$ on disjoint ground sets is the simplicial complex whose faces are the sets 
  of the form $\sigma \cup \tau$, where $\sigma \in \Gamma$ and $\tau \in \Delta$. The 
  following proposition classifies the situations in which $\Gamma * \Delta$ is 
  Buchsbaum*. A similar statement (with a similar proof) holds for the Buchsbaum 
  property.

  \begin{proposition} \label{join}
    Let $\Gamma$ and $\Delta$ be
    simplicial complexes, each having at least one vertex.
    The following are equivalent:
    \begin{itemize}
       \item[(i)] $\Gamma * \Delta$ is Buchsbaum* over $k$.
       \item[(ii)] $\Gamma * \Delta$ is doubly Cohen-Macaulay over $k$.
       \item[(iii)] $\Gamma$ and $\Delta$ are doubly Cohen-Macaulay over $k$.
    \end{itemize}
  \end{proposition}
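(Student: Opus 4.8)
The plan is to establish the two equivalences ${\rm (i)} \Leftrightarrow {\rm (ii)}$ and ${\rm (ii)} \Leftrightarrow {\rm (iii)}$ separately, writing $d-1$ and $e-1$ for the dimensions of $\Gamma$ and $\Delta$, so that $\Gamma * \Delta$ has dimension $d+e-1$. The crux of the first equivalence is the claim that, when $\Gamma$ and $\Delta$ each have a vertex, the complex $\Gamma * \Delta$ is Buchsbaum over $k$ if and only if it is Cohen-Macaulay over $k$. Indeed, if $\Gamma * \Delta$ is Buchsbaum and $\sigma$ is a facet of $\Gamma$, then $\sigma$ is a nonempty face of $\Gamma * \Delta$ and $\link_{\Gamma * \Delta}(\sigma) = \link_\Gamma(\sigma) * \Delta = \{\varnothing\} * \Delta = \Delta$, so Theorem \ref{BuchsbaumCharacterization}\,(ii) forces $\Delta$ to be Cohen-Macaulay over $k$; by symmetry $\Gamma$ is Cohen-Macaulay over $k$, and then so is $\Gamma * \Delta$, because its face ring is $k[\Gamma] \otimes_k k[\Delta]$, a tensor product of two Cohen-Macaulay $k$-algebras. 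Granting this, both ${\rm (i)}$ and ${\rm (ii)}$ imply that $\Gamma * \Delta$ is Cohen-Macaulay over $k$ (a doubly Cohen-Macaulay complex is Cohen-Macaulay, and a Buchsbaum* complex is Buchsbaum), and under that hypothesis Proposition \ref{CMBuchsbaum}\,(i) asserts precisely that $\Gamma * \Delta$ is Buchsbaum* over $k$ if and only if it is doubly Cohen-Macaulay over $k$; this yields ${\rm (i)} \Leftrightarrow {\rm (ii)}$.

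For ${\rm (ii)} \Leftrightarrow {\rm (iii)}$ I would exploit the identity $(\Gamma * \Delta) \sm v = (\Gamma \sm v) * \Delta$ for a vertex $v$ of $\Gamma$ (and the symmetric one for $v$ a vertex of $\Delta$), together with three standard facts about joins: a join of two nonempty complexes is Cohen-Macaulay over $k$ if and only if both of them are; the dimension of a join is the sum of the two dimensions plus one; and a deletion $\Gamma \sm v$ always contains the empty face. If $\Gamma$ and $\Delta$ are doubly Cohen-Macaulay over $k$, then for each vertex $v$ of $\Gamma$ the complex $(\Gamma \sm v) * \Delta$ is a join of Cohen-Macaulay complexes of dimensions $d-1$ and $e-1$, hence is Cohen-Macaulay over $k$ of dimension $d+e-1 = \dim(\Gamma * \Delta)$; the analogous statement holds for vertices of $\Delta$, so $\Gamma * \Delta$ is doubly Cohen-Macaulay over $k$. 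Conversely, if $\Gamma * \Delta$ is doubly Cohen-Macaulay over $k$, it is in particular Cohen-Macaulay, so $\Gamma$ and $\Delta$ are Cohen-Macaulay over $k$; and for each vertex $v$ of $\Gamma$ the complex $(\Gamma \sm v) * \Delta = (\Gamma * \Delta) \sm v$ is Cohen-Macaulay over $k$ of dimension $d+e-1$. Since $\Delta$ has dimension $e-1$, this forces $\Gamma \sm v$ to be Cohen-Macaulay over $k$ of dimension exactly $d-1$: it cannot have smaller dimension, for otherwise $(\Gamma * \Delta) \sm v$ would have dimension below that of $\Gamma * \Delta$, contradicting double Cohen-Macaulayness. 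Hence $\Gamma$, and by symmetry $\Delta$, is doubly Cohen-Macaulay over $k$.

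The one place where I expect to have to be careful is the degenerate case in which a deletion $\Gamma \sm v$ reduces to the complex $\{\varnothing\}$, that is, in which $\Gamma$ is a single vertex: then $\Gamma * \Delta$ is a cone, which is Cohen-Macaulay but not doubly Cohen-Macaulay and hence not Buchsbaum* by Proposition \ref{CMBuchsbaum}\,(i), while $\Gamma$ is not doubly Cohen-Macaulay either, so all three conditions fail simultaneously and no contradiction arises. Apart from this case and the verification that $\link_{\Gamma * \Delta}(\sigma) = \Delta$ for a facet $\sigma$ of $\Gamma$, the argument is a direct assembly of Theorem \ref{BuchsbaumCharacterization}, Proposition \ref{CMBuchsbaum}, and the elementary homological and ring-theoretic behaviour of simplicial joins; I do not anticipate any genuinely deep step.
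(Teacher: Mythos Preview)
Your argument is correct. The organization differs from the paper's, though the ingredients overlap. The paper runs the cycle ${\rm (i)} \Rightarrow {\rm (iii)} \Rightarrow {\rm (ii)} \Rightarrow {\rm (i)}$: for ${\rm (i)} \Rightarrow {\rm (iii)}$ it invokes Corollary~\ref{Links} (links of nonempty faces in a Buchsbaum* complex are doubly Cohen--Macaulay), applied to a facet of one factor, to conclude directly that the other factor is doubly Cohen--Macaulay; for ${\rm (iii)} \Rightarrow {\rm (ii)}$ it uses the deletion identity $(\Gamma * \Delta) \sm v = (\Gamma \sm v) * \Delta$ and closure of Cohen--Macaulayness under joins; and ${\rm (ii)} \Rightarrow {\rm (i)}$ is Proposition~\ref{CMBuchsbaum}\,(i). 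You instead prove ${\rm (i)} \Leftrightarrow {\rm (ii)}$ and ${\rm (ii)} \Leftrightarrow {\rm (iii)}$ separately. Your intermediate claim that a join with both factors nonempty is Buchsbaum only if it is Cohen--Macaulay (via Theorem~\ref{BuchsbaumCharacterization}\,(ii) applied to the link of a facet) is exactly the observation underlying the paper's use of Corollary~\ref{Links}, but you bypass that corollary---and hence the dependence on Corollary~\ref{MainTheorem}---by arguing directly. In exchange you do a little extra work for ${\rm (ii)} \Rightarrow {\rm (iii)}$, which the paper never needs. Your handling of the degenerate single-vertex case is fine; the paper's cyclic route sidesteps it since Corollary~\ref{Links} yields double Cohen--Macaulayness of the factors in one stroke.
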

  \begin{proof}
    \noindent {\sf (i) $\Rightarrow$ (iii):} 
      Since $\Delta$ contains at least one vertex, there exists a nonempty maximal 
      simplex $\sigma \in \Delta$. Since $\Gamma * \Delta$ is Buchsbaum* 
      over $k$ and the link of $\sigma$ in $\Gamma * \Delta$ is equal to
      $\Gamma$, it follows from Corollary \ref{Links} that $\Gamma$ is 
      doubly Cohen-Macaulay over $k$. It follows in a similar way that 
      $\Delta$ is doubly Cohen-Macaulay over $k$.

    \noindent {\sf (iii) $\Rightarrow$ (ii):} 
      It is well known that the join of two Cohen-Macaulay simplicial complexes 
      over $k$ is Cohen-Macaulay over $k$. The implication follows from this 
      statement, the definition of double Cohen-Macaulayness and the fact that 
      for every vertex $v$, say of $\Gamma$, the complex $(\Gamma * \Delta) \sm 
      v$ is equal to the simplicial join $(\Gamma \sm v) * \Delta$.

   \noindent {\sf (ii) $\Rightarrow$ (i):} 
     This follows from Proposition \ref{CMBuchsbaum} (i).
   \end{proof}

  \subsection{Higher Buchsbaum* connectivity and skeleta}
  \label{sec:skeleta}

  Given a subset $\tau$ of the set of vertices of $\Delta$, we denote by
  $\Delta \sm \tau$ the subcomplex $\{\sigma \in \Delta: \sigma \cap \tau =
  \varnothing\}$ of $\Delta$, consisting of all faces of $\Delta$ which do not 
  contain any element of $\tau$. We define a notion of higher Buchsbaum* 
  connectivity for simplicial complexes as follows.  

  \begin{definition} \label{def:mB*}
     Let $\Delta$ be a simplicial complex and let $m$ be a nonnegative integer. 
     We call $\Delta$ \emph{$m$-Buchsbaum*} over $k$ if $m = 0$ and $\Delta$ is 
     Buchsbaum over $k$ or $m \ge 1$ and $\Delta \sm \tau$ is Buchsbaum* over $k$ 
     of the same dimension as $\Delta$ for every set $\tau$  
     of vertices of $\Delta$ of cardinality less than $m$. 
  \end{definition}

  Thus the class of $0$-Buchsbaum* complexes coincides with that 
  of Buchsbaum complexes and the class of $1$-Buchsbaum* 
  complexes coincides with that of Buchsbaum* complexes. Our notion
  of higher connectivity for Buchsbaum* complexes is analogous to that already
  existing for Buchsbaum and Cohen-Macaulay complexes: Given a positive integer 
  $m$, a simplicial complex $\Delta$ is called \emph{$m$-Buchsbaum} over $k$ 
  in \cite{Mi} (respectively, \emph{$m$-Cohen-Macaulay} over $k$ in \cite{Ba}) if 
  $\Delta \sm \tau$ is Buchsbaum over $k$ (respectively, Cohen-Macaulay over $k$) 
  of the same dimension as $\Delta$ for every set $\tau$ of vertices of $\Delta$ 
  of cardinality less than $m$. 

  The following two statements generalize Proposition \ref{CMBuchsbaum} (i) and
  Corollary \ref{MainTheorem}, respectively. 

 \begin{proposition} \label{prop:CMmB*}
     For a Cohen-Macaulay simplicial complex $\Delta$ over $k$ and a nonnegative 
     integer $m$, the following conditions are equivalent: 
     \begin{itemize}
        \item[(i)] $\Delta$ is $m$-Buchsbaum* over $k$. 
        \item[(ii)] $\Delta$ is $(m+1)$-Cohen-Macaulay over $k$. 
     \end{itemize}
  \end{proposition}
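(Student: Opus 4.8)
The plan is to prove the equivalence by induction on $m$, using the already established case $m=1$ (which is exactly Proposition \ref{CMBuchsbaum} (i)) as the base case, together with the observation that the relevant properties pass to deletions $\Delta \sm v$ of a single vertex.

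First I would record the base cases. For $m=0$ the claim is that a Cohen-Macaulay complex is $1$-Cohen-Macaulay, which is immediate from the definition, and for $m=1$ the claim is precisely Proposition \ref{CMBuchsbaum} (i), since a Cohen-Macaulay complex is in particular Buchsbaum. For the inductive step, suppose $m \ge 2$ and that the equivalence holds for all smaller values (and for all Cohen-Macaulay complexes). The key reduction is the following: by Definition \ref{def:mB*}, $\Delta$ is $m$-Buchsbaum* over $k$ if and only if $\Delta \sm \tau$ is Buchsbaum* over $k$ of the same dimension as $\Delta$ for every vertex set $\tau$ with $|\tau| < m$; splitting off one vertex $v \in \tau$ and writing $\tau = \{v\} \cup \tau'$, this is equivalent to saying that for every vertex $v$ of $\Delta$, the complex $\Delta \sm v$ is $(m-1)$-Buchsbaum* over $k$ of the same dimension as $\Delta$, together with the condition that $\Delta$ itself is Buchsbaum* (the case $\tau = \varnothing$ — needed only once $m \ge 1$). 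Similarly, $\Delta$ is $(m+1)$-Cohen-Macaulay over $k$ if and only if $\Delta \sm v$ is $m$-Cohen-Macaulay of the same dimension as $\Delta$ for every vertex $v$, together with $\Delta$ being Cohen-Macaulay.

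Now I would put the pieces together. Since $\Delta$ is Cohen-Macaulay, each deletion $\Delta \sm v$ of the same dimension is again Cohen-Macaulay over $k$: indeed, this is exactly what double Cohen-Macaulayness gives, and the $m=1$ case already shows $\Delta$ is $2$-Cohen-Macaulay, hence doubly Cohen-Macaulay, so every such $\Delta \sm v$ is Cohen-Macaulay of the correct dimension. Therefore the induction hypothesis applies to each $\Delta \sm v$: it is $(m-1)$-Buchsbaum* over $k$ if and only if it is $m$-Cohen-Macaulay over $k$. Combining this with the two reductions of the previous paragraph, and using once more Proposition \ref{CMBuchsbaum} (i) to handle the extra ``$\Delta$ itself is Buchsbaum*'' versus ``$\Delta$ itself is Cohen-Macaulay'' clause (both of which hold, since $\Delta$ is doubly Cohen-Macaulay), we conclude that $\Delta$ is $m$-Buchsbaum* over $k$ if and only if $\Delta$ is $(m+1)$-Cohen-Macaulay over $k$, completing the induction.

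The main obstacle is bookkeeping rather than substance: one must be careful that the dimension condition is correctly threaded through the reduction (the phrase ``of the same dimension as $\Delta$'' must be applied consistently to $\Delta \sm v$ and then to its further deletions), and that the degenerate cases where $\Delta \sm \tau$ becomes empty or drops dimension are handled by the definitions exactly as in the Buchsbaum and Cohen-Macaulay settings. A minor point worth stating explicitly is that for a Cohen-Macaulay $\Delta$ the deletion $\Delta \sm v$, when it has the same dimension as $\Delta$, is Cohen-Macaulay — this is what makes the induction hypothesis applicable — and this follows from double Cohen-Macaulayness of $\Delta$, itself a consequence of the $m=1$ case already in hand via Proposition \ref{CMBuchsbaum} (i).
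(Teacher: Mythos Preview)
Your proposal is correct and follows essentially the same inductive route as the paper: reduce to vertex deletions $\Delta \sm v$, use Proposition~\ref{CMBuchsbaum} (i) to secure double Cohen--Macaulayness (hence Cohen--Macaulayness of $\Delta \sm v$), and apply the induction hypothesis. The only cosmetic difference is that the paper handles the implication ${\rm (ii)} \Rightarrow {\rm (i)}$ directly rather than by induction---if $\Delta$ is $(m+1)$-Cohen--Macaulay then each $\Delta \sm \tau$ with $|\tau| < m$ is doubly Cohen--Macaulay and hence Buchsbaum* by Proposition~\ref{CMBuchsbaum} (i)---whereas you fold both implications into a single induction; either way works.
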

  \begin{proof} 
     \noindent {\sf (i) $\Rightarrow$ (ii):} The implication is trivial for $m=0$
     and follows from Proposition \ref{CMBuchsbaum} for $m=1$. We assume that $m \ge 2$
     and proceed by induction on $m$. Suppose that $\Delta$ is $m$-Buchsbaum* over $k$.
     To verify (ii), it suffices to show that $\Delta \sm v$ is $m$-Cohen-Macaulay over 
     $k$ of the same dimension as $\Delta$ for every vertex $v$ of $\Delta$. Indeed, 
     $\Delta$ is doubly Cohen-Macaulay over $k$ by the special case $m=1$ already 
     treated and hence $\Delta \sm v$ is Cohen-Macaulay over $k$ of the same dimension 
     as $\Delta$. Since $\Delta \sm v$ is 
     $(m-1)$-Buchsbaum* over $k$ by Definition \ref{def:mB*}, the desired statement
     follows from the induction hypothesis.  

     \noindent {\sf (ii) $\Rightarrow$ (i):} This follows from part (i) of
     Proposition \ref{CMBuchsbaum} and the relevant definitions.
  \end{proof}

  \begin{proposition} \label{withandwithoutstar}
    Let $m$ be a nonnegative integer and $\Delta$ be a simplicial complex. If $\Delta$ 
    is $m$-Buchsbaum* over $k$, then $\Delta$ is $(m+1)$-Buchsbaum over $k$.
  \end{proposition}
  \begin{proof}
    Let $d-1$ be the dimension of $\Delta$. The statement is a tautology 
    for $m=0$. Assume that $m \ge 1$ and let $\tau$ be a set of vertices 
    of $\Delta$ of cardinality at most $m$. We need to show that $\Delta 
    \sm \tau$ is Buchsbaum over $k$ of dimension $d-1$. This is clear if
    $\tau = \varnothing$. Otherwise, let $v$ 
    be an element of $\tau$ and let $\sigma = \tau \sm \{v\}$ and $\Gamma
    = \Delta \sm \sigma$. The complex $\Gamma$ is Buchsbaum* over $k$ by 
    Definition \ref{def:mB*} and hence it is doubly Buchsbaum over $k$ by 
    Corollary \ref{MainTheorem}. This implies that $\Gamma \sm v$ is 
    Buchsbaum over $k$ of dimension $d-1$. Since $\Gamma \sm v = \Delta 
    \sm \tau$, the latter complex is Buchsbaum over $k$ of dimension $d-1$. 
    This completes the proof.
  \end{proof}  

  Next we show that Buchsbaum* connectivity increases when passing to skeleta. 
  Recall that the \emph{$i$-skeleton} of a simplicial complex $\Delta$ is 
  defined as the simplicial complex $\Delta^{\langle i \rangle}$ of all faces 
  of $\Delta$ of dimension $\le i$. It is known \cite[Corollary 7.6]{Mi} that 
  if $\Delta$ is $(d-1)$-dimensional and Buchsbaum over $k$, then the $i$-skeleton 
  of $\Delta$ is doubly Buchsbaum over $k$ for every $i \le d-2$. In view of 
  Corollary \ref{MainTheorem}, the following is a stronger statement.

  \begin{proposition} \label{1-coskeleton}
     Let $\Delta$ be a $(d-1)$-dimensional simplicial complex which is 
     Buchsbaum over $k$. Then the $i$-skeleton $\Delta^{\langle i \rangle}$ 
     of $\Delta$ is Buchsbaum* over $k$ for every $i \leq d-2$. 
  \end{proposition}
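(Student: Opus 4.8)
The plan is to verify the two defining conditions for $\Delta^{\langle i \rangle}$ to be Buchsbaum* over $k$: first that $\Delta^{\langle i \rangle}$ is Buchsbaum of dimension $i$, and then that equation \eqref{eq:Buch*} holds for every point of its geometric realization. The first part is classical: since $\Delta$ is Buchsbaum of dimension $d-1$, condition (ii) of Theorem \ref{BuchsbaumCharacterization} tells us that $\link_\Delta(\tau)$ is Cohen-Macaulay for every nonempty face $\tau$, and links in skeleta relate to skeleta of links, so $\Delta^{\langle i \rangle}$ is pure of dimension $i$ with Cohen-Macaulay vertex links; alternatively one invokes \cite[Corollary 7.6]{Mi} directly. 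So the real content is the second condition.

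For the homological condition, the key tool is that for a Buchsbaum (hence Cohen-Macaulay-link) complex $\Delta$, the $i$-skeleton $\Gamma := \Delta^{\langle i \rangle}$ with $i \le d-2$ has reduced homology concentrated in a controlled way: because $\Delta$ itself has vanishing reduced homology below dimension $d-1$ is \emph{not} assumed, but Buchsbaumness does give that $\link$s are Cohen-Macaulay, and truncating a Cohen-Macaulay complex at dimension $i$ produces homology only in degree $i$. The plan is therefore to fix $p \in |\Gamma|$, let $\tau \in \Gamma$ be the unique face with $p$ in the relative interior of $|\tau|$, and use that $|\Gamma| - p$ deformation retracts onto $|\cost_\Gamma(\tau)|$, exactly as in the proof of Proposition \ref{Local}. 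One then needs to show $\rho_*^\tau : \widetilde{H}_{i}(\Gamma;k) \to \widetilde{H}_{i}(\Gamma, \cost_\Gamma(\tau);k)$ is surjective for every $\tau \in \Gamma$, and by the commutative triangle argument in Proposition \ref{Local} it suffices to treat $\tau$ a vertex or, more economically, to establish the equivalent local cohomology statement $(\Socle H^{i+1}(k[\Gamma]))_j = 0$ for $j \ne 0$.

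The cleanest route is the local cohomology characterization, condition (iii) of Proposition \ref{Local}. Here I would use the relationship between the local cohomology of $k[\Delta^{\langle i \rangle}]$ and that of $k[\Delta]$: by Hochster-type formulas, $H^{j}_{\mathfrak m}(k[\Gamma])$ is built from the reduced (co)homology of links $\link_\Delta(\sigma)^{\langle i - |\sigma| \rangle}$, and since each $\link_\Delta(\sigma)$ is Cohen-Macaulay, its $(i-|\sigma|)$-skeleton has reduced homology only in top degree $i - |\sigma|$. Tracking degrees in Hochster's formula then forces the socle of $H^{i+1}_{\mathfrak m}(k[\Gamma])$ to live in degree $0$. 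Combined with the first (Buchsbaum) part, Proposition \ref{Local} gives that $\Gamma$ is Buchsbaum* over $k$.

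The main obstacle I anticipate is the bookkeeping in the Hochster-formula degree argument, namely confirming that skeletalization commutes with taking links in the way needed ($\link_{\Delta^{\langle i\rangle}}(\sigma) = (\link_\Delta(\sigma))^{\langle i - |\sigma|\rangle}$, which is straightforward) and then pinning down that the only contribution to the socle in nonzero degree would have to come from homology of a skeletalized Cohen-Macaulay link in a non-top degree, which vanishes. An alternative, more topological obstacle-avoidance is to argue directly with the deformation retract and the long exact sequence of $(|\Delta|, |\Gamma|)$ or $(|\Gamma|, \cost_\Gamma(\tau))$, using that $\cost_\Gamma(\tau)$ for $\tau$ of dimension $\le i$ inside a Buchsbaum complex is itself Cohen-Macaulay of dimension $i$ when $\tau$ is a facet of $\Gamma$ — but the local cohomology formulation is likely to be the shortest. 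Either way, the heart of the matter is purely the homology-concentration property of skeleta of Cohen-Macaulay complexes, which does the work in every degree simultaneously.
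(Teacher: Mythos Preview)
Your local--cohomology route via Proposition~\ref{Local}(iii) and Hochster's formula can in principle be pushed through, but it is a substantial detour compared with the paper's argument, which is almost a one--liner once the right observation is made. The paper first reduces to the case $i=d-2$ (iterating handles smaller $i$) and treats $d=2$ separately. For $d\ge 3$, set $\Gamma=\Delta^{\langle d-2\rangle}$. The Buchsbaum* condition asks for $\dim_k\widetilde H_{d-3}(|\Gamma|-p;k)=\dim_k\widetilde H_{d-3}(|\Gamma|;k)$. The point you are missing is completely elementary: $(d-3)$--dimensional homology depends only on cells of dimension $\le d-2$, so $\widetilde H_{d-3}(|\Gamma|;k)\cong\widetilde H_{d-3}(|\Delta|;k)$ and $\widetilde H_{d-3}(|\Gamma|-p;k)\cong\widetilde H_{d-3}(|\Delta|-p;k)$ for any $p\in|\Gamma|\subseteq|\Delta|$ (via the costar retraction, $\cost_\Gamma(\tau)$ is the $(d-2)$--skeleton of $\cost_\Delta(\tau)$). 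Now Buchsbaumness of $\Delta$ itself, through Theorem~\ref{BuchsbaumCharacterization}(iii), gives $\widetilde H_{d-2}(|\Delta|,|\Delta|-p;k)=\widetilde H_{d-3}(|\Delta|,|\Delta|-p;k)=0$, and the long exact sequence of the pair $(|\Delta|,|\Delta|-p)$ immediately yields $\widetilde H_{d-3}(|\Delta|-p;k)\cong\widetilde H_{d-3}(|\Delta|;k)$. That is the entire proof.

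Your approach, by contrast, unpacks the same vanishing through the structure of $H^{i+1}_{\mathfrak m}(k[\Gamma])$ and the Cohen--Macaulayness of links; it is not wrong, but it rederives via algebra a fact that is visible at the level of chain groups. The paper's argument uses no properties of links at all beyond what is already packaged in the Buchsbaumness of $\Delta$, and no algebra beyond one long exact sequence.
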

  \begin{proof}
     It suffices to prove the assertion for $i = d-2$. We set $\Gamma = 
     \Delta^{\langle d-2 \rangle}$. For $d = 2$, the $0$-skeleton of any 
     $(d-1)$-dimensional simplicial complex consists of at least two points 
     and therefore it is Buchsbaum* over all fields. 
     Assume that $d \ge 3$. It is known (and follows, for instance, from condition 
     (ii) of Theorem \ref{BuchsbaumCharacterization}) that $\Gamma$ is 
     Buchsbaum over $k$. Thus we only need to check that 
     $\widetilde{H}_{d-3}(|\Gamma|-p;k) \cong 
     \widetilde{H}_{d-3}(|\Gamma|; k)$ 
     for every $p \in |\Gamma|$. By condition (iii) of 
     Theorem \ref{BuchsbaumCharacterization} and the long exact homology 
     sequence for the pair $(|\Delta|, |\Delta| - p)$, we know that
     $\widetilde{H}_{d-3}(|\Delta|-p;k) \cong \widetilde{H}_{d-3}(|\Delta|; k)$
     holds for every $p \in |\Delta|$. Since $p \in |\Gamma| \subseteq |\Delta|$,
     it follows from the fact that the chains groups of $\Gamma$ and $\Delta$
     in simplicial homology coincide in dimensions $\le d-2$ that 
     $\widetilde{H}_{d-3}(|\Gamma|-p;k) \cong 
     \widetilde{H}_{d-3}(|\Delta|-p;k)$ and $\widetilde{H}_{d-3}(|\Gamma|;k)
     \cong \widetilde{H}_{d-3}(|\Delta|;k)$.
     This completes the proof.
  \end{proof}

  The following result extends to Buchsbaum* connectivity analogous statements
  on Buchsbaum \cite[Corollary 7.6]{Mi} and Cohen-Macaulay \cite[Corollary 2.7]{Fl} 
  connectivity.

  \begin{theorem} \label{skeletaconnectivity}
     Let $\Delta$ be a $(d-1)$-dimensional simplicial complex which is
     $m$-Buchsbaum* over $k$. Then the $i$-skeleton $\Delta^{\langle i \rangle}$ 
     is $(m+d-i-1)$-Buchsbaum* over $k$ for every $0 \le i \le d-1$.
  \end{theorem}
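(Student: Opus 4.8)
The plan is to prove this by induction on $m$, using the already-established base cases and a deletion argument that reduces the $m$-Buchsbaum* property of a skeleton to the Buchsbaum* property (i.e. the $1$-Buchsbaum* property) of a skeleton of a lower-parameter complex, where Proposition \ref{1-coskeleton} applies directly.

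First I would dispose of the case $m = 0$: here $\Delta$ is merely Buchsbaum over $k$, and the claim is that $\Delta^{\langle i \rangle}$ is $(d-i-1)$-Buchsbaum* over $k$ for every $0 \le i \le d-1$. For $i = d-1$ this says $\Delta$ itself is $0$-Buchsbaum*, i.e. Buchsbaum, which is the hypothesis. For $i \le d-2$ one must show that $\Delta^{\langle i \rangle} \sm \tau$ is Buchsbaum* of dimension $i$ for every vertex set $\tau$ with $|\tau| < d-i-1$. The key observation is that $\Delta^{\langle i \rangle} \sm \tau = (\Delta \sm \tau)^{\langle i \rangle}$, so it suffices to know that $\Delta \sm \tau$ is Buchsbaum of dimension $d-1$ for such $\tau$ and then invoke Proposition \ref{1-coskeleton}, since $i \le (d - |\tau|) - 2$ is exactly the constraint $|\tau| \le d - i - 2$, i.e. $|\tau| < d - i - 1$. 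The fact that $\Delta \sm \tau$ is Buchsbaum of full dimension when $|\tau| < d - i - 1$ — in particular when $|\tau|$ is that small — should follow from the known statement that a $(d-1)$-dimensional Buchsbaum complex, after deleting a few vertices, remains Buchsbaum of the same dimension; more precisely the $i$-skeleton result \cite[Corollary 7.6]{Mi} quoted just above, or a direct argument using Theorem \ref{BuchsbaumCharacterization}(ii), gives that deletions by small vertex sets of a Buchsbaum complex stay Buchsbaum of full dimension as long as the relevant links survive.

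For the inductive step, assume $m \ge 1$ and that the statement holds for $m - 1$ in place of $m$. Let $\Delta$ be $(d-1)$-dimensional and $m$-Buchsbaum* over $k$, and fix $i$ with $0 \le i \le d-1$; I want to show $\Delta^{\langle i \rangle}$ is $(m + d - i - 1)$-Buchsbaum*. By Definition \ref{def:mB*} this requires that for every vertex set $\tau$ with $|\tau| < m + d - i - 1$, the complex $\Delta^{\langle i \rangle} \sm \tau = (\Delta \sm \tau)^{\langle i \rangle}$ is Buchsbaum* of dimension $i$. If $\tau = \varnothing$ this is the $m = 1$-type statement and follows from Proposition \ref{1-coskeleton} once we know $\Delta$ itself has a full-dimensional Buchsbaum $i$-skeleton, which it does. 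Otherwise pick a vertex $v \in \tau$, set $\sigma = \tau \sm \{v\}$, and consider $\Gamma = \Delta \sm \sigma$. Now $|\sigma| < m + d - i - 2$; by the definition of $m$-Buchsbaum* (applied to $\Delta$, using that $|\sigma|$ is small — indeed $|\sigma| \le m - 1$ may fail, so here I need to be careful and instead argue that $\Gamma = \Delta \sm \sigma$ is $(m - |\sigma|)$-Buchsbaum* of dimension $d-1$, a standard consequence of Definition \ref{def:mB*}), the complex $\Gamma$ is $(m - |\sigma|)$-Buchsbaum* over $k$ of dimension $d-1$. Then $\Gamma^{\langle i \rangle} = (\Delta \sm \sigma)^{\langle i \rangle}$ is, by the induction hypothesis applied to $\Gamma$ with the parameter $m - |\sigma|$ in place of $m$, a $\big((m - |\sigma|) + d - i - 1\big)$-Buchsbaum* complex, and since $|\{v\}| = 1 < (m - |\sigma|) + d - i - 1$ (using $m \ge 1$, $|\sigma| \le m - 1$, $i \le d - 1$), deleting $v$ from it keeps it Buchsbaum* of dimension $i$. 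But $\Gamma^{\langle i \rangle} \sm v = (\Delta \sm \sigma)^{\langle i \rangle} \sm v = (\Delta \sm \tau)^{\langle i \rangle} = \Delta^{\langle i \rangle} \sm \tau$, which is the complex we needed to show is Buchsbaum* of dimension $i$.

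The main obstacle I anticipate is bookkeeping the cardinality inequalities so that the inductive parameter genuinely decreases and stays nonnegative, and in particular establishing cleanly that $\Delta \sm \sigma$ is $(m - |\sigma|)$-Buchsbaum* of the correct dimension — this is the analogue, for the starred notion, of the elementary fact that deleting $j$ vertices from an $m$-Buchsbaum complex yields an $(m-j)$-Buchsbaum complex, and it should be proved as a preliminary lemma (or observed to follow immediately from Definition \ref{def:mB*} by noting that a vertex set of size $< m - |\sigma|$ in $\Delta \sm \sigma$ unions with $\sigma$ to a vertex set of size $< m$ in $\Delta$). Once that reduction lemma and the dimension-preservation of small deletions are in hand, the rest is the diagram-free induction sketched above, with Proposition \ref{1-coskeleton} supplying the single-skeleton input at each stage.
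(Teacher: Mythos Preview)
There is a genuine gap in your base case $m = 0$. You assert that for a Buchsbaum complex $\Delta$ of dimension $d-1$, the deletion $\Delta \sm \tau$ remains Buchsbaum of dimension $d-1$ whenever $|\tau|$ is small, so that Proposition~\ref{1-coskeleton} applies to $\Delta \sm \tau$. But Buchsbaumness is not preserved under vertex deletion, even for a single vertex. For a concrete counterexample, let $\Gamma$ be the one-dimensional complex of Example~\ref{ex:counter}\,(i) (two triangles sharing the vertex $p$) and let $\Delta = q * \Gamma$ be the cone over $\Gamma$ with a new apex $q$. Then $\Delta$ is two-dimensional and Cohen--Macaulay (hence Buchsbaum), yet $\link_{\Delta \sm p}(q) = \Gamma \sm p$ is the disjoint union of two edges and hence not Cohen--Macaulay; so $\Delta \sm p$ is not Buchsbaum. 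Neither the cited result of Miyazaki (which concerns skeleta, not deletions of $\Delta$ itself) nor a direct appeal to Theorem~\ref{BuchsbaumCharacterization}\,(ii) rescues this.

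The same difficulty reappears in your inductive step: you need $\Gamma = \Delta \sm \sigma$ to be $(m - |\sigma|)$-Buchsbaum* of dimension $d-1$, which Definition~\ref{def:mB*} yields only when $|\sigma| \le m$. Since $|\sigma| = |\tau| - 1$ can be as large as $m + d - i - 3$, for $i \le d - 3$ your argument does not cover all required $\tau$ (you flag this yourself, but the proposed fix still invokes the undefined quantity $m - |\sigma|$ when $|\sigma| > m$). The paper avoids both problems by inducting on the codimension $d - i - 1$ rather than on $m$: it reduces to the single step $i = d-2$ and for that step first invokes Proposition~\ref{withandwithoutstar} (the implication $m$-Buchsbaum* $\Rightarrow$ $(m+1)$-Buchsbaum), which is precisely what guarantees that deleting up to $m$ vertices from $\Delta$ keeps it Buchsbaum of full dimension, so that Proposition~\ref{1-coskeleton} then applies to the deletion. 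That use of Proposition~\ref{withandwithoutstar} is the missing ingredient in your approach.
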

  \begin{proof} 
     The statement is a tautology for $i = d-1$. By induction on $d-i-1$, it 
     suffices to show that $\Delta^{\langle d-2 \rangle}$ is $(m+1)$-Buchsbaum* 
     over $k$. Let $\tau$ be any set of vertices of $\Delta$ of cardinality at most 
     $m$ and set $\Gamma = \Delta \sm \tau$. Since $\Delta$ is $(m+1)$-Buchsbaum 
     over $k$ by Proposition \ref{withandwithoutstar}, the complex $\Gamma$ is Buchsbaum 
     over $k$ of dimension $d-1$. It follows from Proposition \ref{1-coskeleton} 
     that $\Gamma^{\langle d-2 \rangle}$ is Buchsbaum* over $k$. Since this 
     skeleton is equal to $\Delta^{\langle d-2 \rangle} \sm 
     \tau$, we conclude that the latter complex is Buchsbaum* over $k$ of 
     dimension $d-2$. Since $\tau$ was arbitrary of cardinality at most $m$,
     the desired statement follows.  
  \end{proof}

  \begin{remark}
  {\rm It is known by the results of Miyazaki \cite{Mi} and Walker \cite{Wa} 
  that double Buchsbaumness and double Cohen-Macaulayness (over a fixed field) 
  are topological properties. It is also known that for $m \ge 3$, neither 
  $m$-Buchsbaumness nor $m$-Cohen-Macaulayness is a topological property. In
  view of Proposition \ref{prop:CMmB*}, it follows that for $m \ge 2$, the 
  $m$-Buchsbaum* condition is not a topological property either. }
  \end{remark}
    
  \begin{remark}
  {\rm A different notion of a higher Buchsbaum* property one may try is the 
  following. Let $\Delta$ be a $(d-1)$-dimensional simplicial complex and let 
  $m$ be a nonnegative integer. We consider the following condition:
  \begin{itemize}
    \item[(M)] $\Delta$ is Buchsbaum over $k$ and 
               $
                \dim_k \widetilde{H}_{d-2}(|\Delta| - P;k)
                = 
                \dim_k \widetilde{H}_{d-2} (\Delta;k) 
               $
               holds for every set $P \subset |\Delta|$ of cardinality at most $m$.
  \end{itemize}
  
  This condition reduces to Buchsbaumness for $m=0$ and to the Buchsbaum* condition 
  for $m = 1$. We claim, however, that no simplicial complex of positive
  dimension satisfies (M) for $m \ge 2$. Clearly, it suffices to show that no such 
  complex satisfies (M) for $m=2$. Suppose on the contrary that $\Delta$ is such a 
  complex. We choose two points $p, q \in |\Delta|$ which lie in the relative interior 
  of some $(d-1)$-dimensional simplex, say $\tau$, of $\Delta$. We triangulate $\tau$ 
  by adding a vertex $z$ and faces $\{z\} \cup \sigma$ for $\sigma \subset \tau$. We 
  realize the new complex in such a way that $p$ and $q$ lie in the relative interior 
  of the realization of two distinct $(d-1)$-dimensional simplices $\tau_p = \{z\} 
  \cup \sigma_p$ and $\tau_q = \{z\} \cup \sigma_q$. We denote by $\Delta'$ the 
  simplicial complex whose simplices are those of $\Delta$ other than $\tau$ and the 
  faces triangulating $\tau$ in the way just described. 
  In particular, $|\Delta' \sm \{\tau_p, \tau_q\}|$ is a deformation retract of 
  $|\Delta| - \{p,q\}$. The assumption that $\Delta$ is Buchsbaum over $k$ and excision 
  give  
  $$0 = H_{d-2}(|\Delta|,|\Delta| -q;k) \cong 
        H_{d-2}(|\Delta|-p,|\Delta| -\{p,q\};k).$$
  The long exact sequence of the triple $(|\Delta|,|\Delta|-p,|\Delta|- \{p,q\})$ then 
  yields that $H_{d-2}(|\Delta|, |\Delta| -\{p,q\};k) = 0$. Thus,
  using the same arguments as in the proof of Proposition 
  \ref{Isomorphism}, it follows from (M) that the inclusion map $\Delta' \sm 
  \{ \tau_p, \tau_q\} \to \Delta'$ induces an isomorphism 
       \begin{equation} \label{eq:Delta'}
         \widetilde{H}_{d-2} (\Delta' \sm \{ \tau_p, \tau_q\};k) \rightarrow 
         \widetilde{H}_{d-2} (\Delta';k). 
       \end{equation} 
  Clearly, $\partial_{d-1} (\tau_p)$ is a boundary in $\Delta'$. Since the map
  (\ref{eq:Delta'}) is an isomorphism, $\partial_{d-1} (\tau_p)$ must be a 
  boundary in $\Delta' \sm \{ \tau_p, \tau_q\}$ as well. However, this is not 
  possible since $\tau_p \cap \tau_q$ is a $(d-2)$-dimensional simplex which 
  lies in the support of $\partial_{d-1} (\tau_p)$ and which is not contained
  in any $(d-1)$-dimensional simplex of $\Delta'$ other than $\tau_p$ and 
  $\tau_q$. This yields the desired contradiction. \qed}
  \end{remark}
  


  \subsection{A generalized convex ear decomposition} 

  In the sequel we describe a class of Buchsbaum* complexes significantly 
  larger than that provided by Proposition \ref{Manifold}. 
  The construction is motivated by and generalizes the convex ear decomposition 
  of simplicial complexes, introduced by Chari \cite{Ch}.

  \begin{theorem} \label{GammaDelta}
     Let $\Gamma$ and $\Delta$ be two simplicial complexes such that:
     \begin{itemize}
       \item[(i)]  $\Gamma$ is $(d-1)$-dimensional and Buchsbaum* over $k$. 
       \item[(ii)]  $\Delta$ is a $(d-1)$-dimensional connected orientable 
          homology manifold over $k$ with boundary $\partial \Delta$ which 
          has the following properties: 
          \begin{itemize}
            \item[(a)]  $\partial \Delta$ is a $(d-2)$-dimensional connected orientable 
               homology manifold over $k$. 
            \item[(b)]  $\partial \Delta = \Gamma \cap \Delta$. 
            \item[(c)]  The inclusion map induces the zero homomorphism 
               $\widetilde{H}_{d-2} (\partial \Delta;k) \rightarrow 
                 \widetilde{H}_{d-2} (\Gamma;k)$. 
          \end{itemize}
     \end{itemize}
     Then $\Gamma \cup \Delta$ is Buchsbaum* over $k$.
  \end{theorem}
  
  As a corollary we obtain an inductive construction as follows.

  \begin{corollary} \label{Class}
     Suppose that $\Delta$ is a $(d-1)$-dimensional simplicial complex and that there 
     exist subcomplexes $\Delta_1, \Delta_2,\dots,\Delta_m$ such that:
     \begin{itemize}
       \item[(i)]  $\Delta = \Delta_1 \cup \Delta_2 \cup \cdots \cup \Delta_m$.
       \item[(ii)]  $\Delta_1$ is a $(d-1)$-dimensional orientable homology 
         manifold over $k$. 
      \item[(iii)]  For $2 \le i \le m$, $\Delta_i$ is a $(d-1)$-dimensional 
         connected orientable homology manifold over $k$ with boundary 
         $\partial \Delta_i$ which has the following properties: 
         \begin{itemize}
           \item[(a)]  $\partial \Delta_i$ is a $(d-2)$-dimensional connected 
              orientable homology manifold over $k$. 
           \item[(b)]  $\partial \Delta_i = \Delta_i \cap (\Delta_1 \cup 
                            \cdots \cup \Delta_{i-1})$. 
           \item[(c)]  The inclusion map induces the zero homomorphism
              $$\widetilde{H}_{d-2} (\partial \Delta_i;k) \rightarrow 
              \widetilde{H}_{d-2} (\Delta_1 \cup \cdots \cup \Delta_{i-1};k). $$    
         \end{itemize}
     \end{itemize}
     Then $\Delta$ is Buchsbaum* over $k$.
  \end{corollary}
  \begin{proof}
    The complex $\Delta_1$ is Buchsbaum* over $k$ by Proposition \ref{Manifold}. 
    The theorem follows from this statement and Theorem \ref{GammaDelta} by 
    induction on $m$. 
  \end{proof}

  \begin{proof}[Proof of Theorem \ref{GammaDelta}]
     Since $\Gamma$ and $\Delta$ are Buchsbaum over $k$ of dimension $d-1$ and 
     $\Gamma \cap \Delta$ is Buchsbaum over $k$ of dimension $d-2$, it follows 
     by a standard argument (used, for instance, in the proof of \cite[Lemma 1]{BjH}) 
     that $\Gamma \cup \Delta$ is also Buchsbaum over $k$. We consider a point $p \in 
     |\Gamma \cup \Delta|$. To show that (\ref{eq:Buch*}) (or the equivalent condition
     (iv) of Proposition \ref{Isomorphism}) holds for $\Gamma \cup \Delta$, we 
     distinguish three cases.  

     \medskip

     \noindent {\sf Case 1:} $p \in |\Gamma| \sm |\Delta|$. The naturality of the 
        long exact homology sequence for pairs gives the commutative diagram
  
        \medskip

        $$
          \begindc{\commdiag}[3]
            \obj(0,30){ $\widetilde{H}_{d-1} (|\Gamma|;k)$ }
            \obj(55,30){$H_{d-1} (|\Gamma|,|\Gamma|-p;k)$.}
            \mor(15,30)(31,30){$\rho_*$}
            \obj(0,50){$\widetilde{H}_{d-1} (|\Gamma \cup \Delta|;k)$ }
            \obj(56,50){$H_{d-1} (|\Gamma \cup \Delta|,
                               |\Gamma \cup \Delta| - p;k)$}
            \mor(14,50)(31,50){$\widetilde{\rho}_*$}
            \mor(55,32)(55,48){$\epsilon_*$}
            \mor(0,32)(0,48){$~$}
         \enddc
       $$

       \medskip

       \noindent Since $\Gamma$ is Buchsbaum* over $k$, the map $\rho_*$ is surjective.
       The map $\epsilon_*$ is an excision map and hence an isomorphism. The 
       commutativity of the diagram implies that $\widetilde{\rho}_*$ is surjective as 
       well.

     \medskip

     \noindent {\sf Case 2:} $p \in |\Delta| \sm |\Gamma|$. The long exact homology 
     sequences for the triples  $(|\Delta|, |\Delta| - p, \partial |\Delta|)$ and 
     $(|\Gamma \cup \Delta|, |\Gamma \cup \Delta| - p, \partial |\Delta| )$ and the 
     naturality of such sequences yield the following commutative diagram:

       $$
         \begindc{\commdiag}[3] 
           \obj(0,50){$0$}
           \obj(31,50){$H_{d-1} (|\Gamma \cup \Delta| - p, 
                                               \partial |\Delta|;k)$}

           \obj(78,50){$H_{d-1} (|\Gamma \cup \Delta|, 
                                               \partial |\Delta|;k)$}
           \obj(128,50){$H_{d-1}(|\Gamma \cup \Delta|, 
                                            |\Gamma \cup \Delta| - p;k)$}
           \obj(31,30){$H_{d-1} (|\Delta| - p, \partial |\Delta|;k)$}
           \obj(78,30){$H_{d-1} (|\Delta|, \partial |\Delta|;k)$}
           \obj(127,30){$H_{d-1}(|\Delta|, |\Delta| - p;k)$.}
          
           \mor(0,50)(11,50){$~$}
           \mor(51,50)(62,50){$~$}
           \mor(95,50)(106,50){$\widetilde{\delta}_*$}
           \mor(51,30)(63,30){$~$} 
           \mor(32,32)(32,48){$~$}
           \mor(80,32)(80,48){$~$}
           \mor(126,32)(126,48){$\epsilon_*$}
           \mor(94,30)(108,30){$\delta_*$}
         \enddc
       $$

       \medskip

       \noindent Our assumptions on $\Delta$ and $p$ imply that $\widetilde{H}_{d-1} 
       (|\Delta| - p, \partial |\Delta|;k) = 0$ and $H_{d-1} (|\Delta|, 
       \partial |\Delta|;k) \cong H_{d-1} (|\Delta|, |\Delta|-p;k) \cong k$. 
       It follows that the map $\delta_*$ is an isomorphism. Since $\epsilon_*$ is an 
       isomorphism by excision, the commutativity 
       of the square on the right implies that the map $\widetilde{\delta}_*$ is 
       surjective. These facts and the exactness of the top row of the diagram imply 
       that 
         \begin{equation} \label{Dim1}
           \dim_k H_{d-1}(|\Gamma \cup \Delta|, \partial |\Delta|;k) \ = 
           \ \dim_k H_{d-1}(|\Gamma \cup \Delta| - p, \partial |\Delta|; k)
           \, + \, 1.  
         \end{equation}
       Our assumption ${\rm (c)}$ and the long exact homology sequences for 
       the pairs $(|\Gamma \cup \Delta|, \partial |\Delta|)$ and $(|\Gamma \cup \Delta|
       - p, \partial |\Delta|)$ yield exact sequences of the form

       $$
         \begindc{\commdiag}[3]
           \obj(0,30){$0$}
           \obj(25,30){$\widetilde{H}_{d-1} (|\Gamma \cup \Delta|;k)$}
           \obj(65,30){$H_{d-1}(|\Gamma \cup \Delta|, 
                                   \partial |\Delta|;k)$}
           \obj(102,30){$\widetilde{H}_{d-2}(\partial |\Delta|;k)$}
           \obj(124,30){$0$}
           \mor(0,30)(13,30){$~$}
           \mor(37,30)(48,30){$~$}
           \mor(82,30)(93,30){$~$}
           \mor(112,30)(125,30){$~$}
         \enddc
       $$

       and 

       $$
         \begindc{\commdiag}[3]
           \obj(0,30){$0$}
           \obj(29,30){$\widetilde{H}_{d-1} (|\Gamma \cup \Delta| - p;k)$}
           \obj(76,30){$H_{d-1}(|\Gamma \cup \Delta| - p, 
                                   \partial |\Delta|;k)$}
           \obj(117,30){$\widetilde{H}_{d-2}(\partial |\Delta|;k)$}
           \obj(138,30){$0$.}
           \mor(0,30)(13,30){$~$}
           \mor(45,30)(56,30){$~$}
           \mor(96,30)(107,30){$~$}
           \mor(127,30)(138,30){$~$}
         \enddc
       $$

       \noindent These sequences and (\ref{Dim1}) imply that
         \begin{equation} \label{Dim2}
           \dim_k \widetilde{H}_{d-1}(|\Gamma \cup \Delta|;k) \ = 
           \ \dim_k \widetilde{H}_{d-1}(|\Gamma \cup \Delta| - p; k) \, + \, 1.  
         \end{equation} 
       Finally, we consider the exact sequence 
       $$
         \begindc{\commdiag}[3] 
           \obj(0,50){$0$}
           \obj(28,50){$\widetilde{H}_{d-1} (|\Gamma \cup \Delta| - p;k)$}
           \obj(68,50){$\widetilde{H}_{d-1} (|\Gamma \cup \Delta|;k)$}
           \obj(118,50){$H_{d-1}({{|\Gamma \cup \Delta|,} {|\Gamma \cup \Delta| - p}};k)=k$,}
            \mor(0,50)(11,50){$~$}
            \mor(44,50)(55,50){$~$}
            \mor(80,50)(91,50){$\widetilde{\rho}_*$}
         \enddc
       $$
       coming from the long exact homology sequence for the pair $(|\Gamma \cup \Delta|, 
       |\Gamma \cup |\Delta| - p)$, where $H_{d-1} (|\Gamma \cup \Delta|, 
       |\Gamma \cup \Delta| - p;k) \cong H_{d-1} (|\Delta|, |\Delta| - p;k) 
       \cong k$. Equation (\ref{Dim2}) shows that $\widetilde{\rho}_*$ cannot be the 
       zero map. This forces $\widetilde{\rho}_*$ to be surjective.
 
    \medskip

     \noindent {\sf Case 3:} $p \in |\Gamma \cap \Delta| = \partial |\Delta|$. 
     From the Mayer-Vietoris sequence for the pairs $(|\Delta|, |\Gamma|)$ 
     and $(|\Delta|-p, |\Gamma|-p)$ and the naturality of such sequences, we get 
     a commutative diagram as follows:

       $$
         \begindc{\commdiag}[2]
           \obj(55,17){$\widetilde{H}_{d-2}(|\Gamma|-p;k)$}
           \obj(55,10){$\oplus$}
           \obj(55,3){$\widetilde{H}_{d-2}(|\Delta|-p;k)$}
           \obj(115,10){$\widetilde{H}_{d-2}(|\Gamma \cup \Delta|-p;k)$}
           \obj(178,10){$\widetilde{H}_{d-3}(\partial |\Delta|-p;k)$.}
           \obj(0,50){$\widetilde{H}_{d-2} (\partial |\Delta|;k)$}
           \obj(55,57){$\widetilde{H}_{d-2}(|\Gamma|;k)$}
           \obj(55,50){$\oplus$}
           \obj(55,43){$\widetilde{H}_{d-2}(|\Delta|;k)$}
           \obj(115,50){$\widetilde{H}_{d-2}(|\Gamma \cup \Delta|;k)$}
           \obj(178,50){$\widetilde{H}_{d-3}(\partial |\Delta|;k)$}
           \mor(138,10)(158,10){}
           \mor(136,50)(160,50){}
           \mor(16,50)(40,50){$\alpha_*$}
           \mor(70,10)(92,10){$~$}
           \mor(71,50)(95,50){$~$}
           \mor(55,16)(55,40){$\beta_*$}
           \mor(115,12)(115,48){$\widetilde{\iota}_*$}
           \mor(178,12)(178,48){$\gamma_*$}
         \enddc
       $$

       \medskip

       \noindent
       We claim that the map $\alpha_*$, induced by the inclusions of $\partial 
       |\Delta|$ into $|\Gamma|$ and $|\Delta|$, is the zero map. Indeed, for the 
       map $\widetilde{H}_{d-2} (\partial |\Delta|; k) \rightarrow \widetilde{H}_{d-2}
       (|\Gamma|; k)$ induced by inclusion, this holds by condition (c). For the other 
       map, consider the exact sequence 

       $$
         \begindc{\commdiag}[3]
           \obj(0,30){$\widetilde{H}_{d-1} (|\Delta|;k)$}
           \obj(40,30){$H_{d-1}(|\Delta|, \partial |\Delta|;k)$}
           \obj(80,30){$\widetilde{H}_{d-2}(\partial |\Delta|;k)$}
           \obj(115,30){$\widetilde{H}_{d-2}(|\Delta|;k)$,}
           \mor(10,30)(25,30){$$}
           \mor(54,30)(69,30){$\partial_*$}
           \mor(91,30)(106,30){$j_*$}
         \enddc
       $$

       \medskip

       \noindent 
       obtained from the long exact homology sequence of the pair 
       $(|\Delta|, \partial |\Delta|)$. Since $\Delta$ is a $(d-1)$-dimensional 
       connected orientable homology manifold with nonempty boundary $\partial \Delta$, 
       we have $\widetilde{H}_{d-1} (|\Delta|;k) = 0$ and $H_{d-1}(|\Delta|, 
       \partial |\Delta|;k) \cong k$. Since $\partial \Delta$ is a $(d-2)$-dimensional 
       connected orientable homology manifold without boundary, we have 
       $\widetilde{H}_{d-2}(\partial |\Delta|;k) \cong k$. It follows that
       $\partial_*$ is an isomorphism and hence that $j_* : \widetilde{H}_{d-2} (\partial 
       |\Delta|;k) \rightarrow \widetilde{H}_{d-2} (|\Delta|;k)$ is the zero map. 

       Finally, we note that the maps $\widetilde{H}_{d-2}(|\Gamma|-p; k) 
       \rightarrow \widetilde{H}_{d-2}(|\Gamma|; k)$, 
       $\widetilde{H}_{d-2}(|\Delta|-p; k) \rightarrow 
       \widetilde{H}_{d-2}(|\Delta|; k)$ and 
       $\widetilde{H}_{d-3}(\partial |\Delta|-p; k) \rightarrow 
       \widetilde{H}_{d-3}(\partial |\Delta|; k)$, induced by inclusions, 
       are isomorphisms. This follows by our assumption (i) in the first 
       case, by condition (a) of our assumption (ii) and Proposition 
       \ref{Manifold} in the third case and by the long exact homology 
       sequence of the pair $(|\Delta|,|\Delta|-p)$ in the second case, 
       if one observes that $p \in \partial |\Delta|$ implies 
       $H_i (|\Delta|, |\Delta|-p;k) = 0$ for all $i$.  
       Thus the vertical maps $\beta_*$ and $\gamma_*$ in the diagram are 
       isomorphisms.

       Given the above, it follows by diagram chasing that the map 
       $\widetilde{\iota}_*: \widetilde{H}_{d-2}(|\Gamma \cup \Delta| - p; k) 
       \rightarrow \widetilde{H}_{d-2}(|\Gamma \cup \Delta|; k)$ is
       injective and hence an isomorphism. Thus (\ref{eq:Buch*}) holds in 
       this case as well.
  \end{proof}

  \begin{remark}
  {\rm Example \ref{ex:counter} (iii) shows that condition (c) in Theorem  
  \ref{GammaDelta} and Corollary \ref{Class} cannot be dropped.}
  \end{remark}

  \section{The graph of a Buchsbaum* complex}
  \label{sec:graph}

  The graph of a simplicial complex $\Delta$ is defined as the abstract graph 
  $\gG(\Delta)$ whose nodes are the vertices of $\Delta$ and whose edges are the 
  one-dimensional simplices. This section shows that a result of Nevo \cite{Ne} 
  on the rigidity of graphs of doubly Cohen-Macaulay complexes extends easily to 
  those of connected Buchsbaum* complexes. Since the proof follows from that of 
  \cite{Ne} by minor modifications, we will only indicate those points in the 
  proof where some modification is actually needed.   

  Let $\gG$ be an abstract graph (without loops or multiple edges) on the set 
  of nodes $V$ and let $\|x\|$ denote the Euclidean length of $x \in \RR^d$. 
  A map $f: V \to \RR^d$ is called \emph{$\gG$-rigid} if there exists 
  $\varepsilon > 0$ with the following property: if $g: V \to \RR^d$ is a map 
  satisfying $\|f(v) - g(v)\| < \varepsilon$ for every $v \in V$ 
  and $\|g(u) - g(v)\| = \|f(u) - f(v)\|$ for every edge $\{u, v\}$ of $\gG$, 
  then we have $\|g(u) - g(v)\| = \|f(u) - f(v)\|$ for all $u, v \in V$. 
  The graph $\gG$ is called \textit{generically $d$-rigid} if the set of all 
  $\gG$-rigid maps $f: V \to \RR^d$ is open and dense in the topological 
  vector space of all maps $f: V \to \RR^d$. 

  \begin{theorem} \label{graph}
     Let $\Delta$ be a $(d-1)$-dimensional connected simplicial complex which 
     is Buchsbaum*  over some field $k$. If $d \ge 3$, then the graph $\gG(\Delta)$ is 
     generically $d$-rigid. 
  \end{theorem}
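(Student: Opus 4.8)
The plan is to follow the strategy of Nevo \cite{Ne} for doubly Cohen-Macaulay complexes, using the rigidity-theoretic machinery of Kalai and the structural results already established in this paper. The central tool is the well-known "cone lemma" and "gluing lemma" for generic rigidity: if $\gG_1$ and $\gG_2$ are generically $d$-rigid graphs sharing at least $d$ common nodes, then $\gG_1 \cup \gG_2$ is generically $d$-rigid; and if $\gG$ is generically $(d-1)$-rigid, then the cone over $\gG$ (adding one new node joined to all nodes of $\gG$) is generically $d$-rigid. One also uses that the graph of the boundary complex of a simplex on $d+1$ vertices, i.e.\ the complete graph $K_{d+1}$, is generically $d$-rigid, and more generally that the graph of any Cohen-Macaulay complex of dimension $d-1 \geq 2$ is generically $d$-rigid (this is Kalai's theorem, via Fogelsanger, which is the real input behind \cite{Ne}).

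First I would set up an induction on $d$. The base case $d = 3$ requires showing that the graph of a connected $2$-dimensional Buchsbaum* complex is generically $3$-rigid; here one can invoke Fogelsanger's theorem that the graph of any connected pure $2$-dimensional complex in which every vertex link is a connected graph (in particular, any connected normal pseudomanifold, and more generally any $2$-dimensional complex whose $1$-skeleton is generically $3$-rigid via its cycle structure) is generically $3$-rigid — in Nevo's treatment this base case is handled by the fact that Cohen-Macaulay $2$-complexes have generically $3$-rigid graphs together with an argument that reduces the Buchsbaum* case to it. For the inductive step, I would pick a vertex $v$ of $\Delta$ and write $\gG(\Delta) = \gG(\star_\Delta(v)) \cup \gG(\Delta \sm v)$. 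The star of $v$ is a cone over the link $\link_\Delta(v)$, which by Corollary \ref{Links} is doubly Cohen-Macaulay over $k$, hence Cohen-Macaulay of dimension $d-2$; by Kalai's theorem its graph is generically $(d-1)$-rigid (when $d - 1 \geq 2$, i.e.\ $d \geq 3$), so the cone $\gG(\star_\Delta(v))$ is generically $d$-rigid. By Corollary \ref{MainTheorem}, $\Delta$ is doubly Buchsbaum, so $\Delta \sm v$ is again Buchsbaum of dimension $d-1$; passing to its connected components (each of which must have enough vertices in common with the star — here is where double Buchsbaumness and connectivity of $\Delta$ enter) and applying the inductive hypothesis to each, one gets that $\gG(\Delta \sm v)$ is generically $d$-rigid. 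Since $\star_\Delta(v)$ and each relevant component of $\Delta \sm v$ share at least $d$ vertices (because the link $\link_\Delta(v)$ is Cohen-Macaulay of dimension $d-2$, hence has at least $d$ vertices, and these are also vertices of $\Delta \sm v$ — this needs the doubly-Buchsbaum hypothesis to guarantee the dimension is preserved and the intersection is large), the gluing lemma finishes the step.

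The main obstacle, and the point where genuine care beyond "minor modification" is needed, is the bookkeeping of the intersection graph $\gG(\star_\Delta(v)) \cap \gG(\Delta \sm v)$ and ensuring it contains at least $d$ nodes forming part of a rigid subgraph, together with making sure that when $\Delta \sm v$ is disconnected each component still glues correctly to the star. In Nevo's double-Cohen-Macaulay setting the non-vanishing of top homology and double Cohen-Macaulayness of the deletion make this transparent; in the Buchsbaum* setting one instead leans on Corollary \ref{MainTheorem} (double Buchsbaumness) and Corollary \ref{Links} (doubly Cohen-Macaulay links) to supply exactly the analogous facts, so the modification really is localized. The other delicate point is the base case $d=3$, where one cannot cone down further and must cite the appropriate Fogelsanger-type rigidity result for $2$-complexes directly; I would verify that a connected $2$-dimensional Buchsbaum* complex satisfies the hypotheses of that result (purity and connectedness of links both follow from Theorem \ref{BuchsbaumCharacterization}(ii)). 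With these two points settled, the induction goes through exactly as in \cite{Ne}, and the remaining combinatorial lemmas about generic rigidity are standard and need no change.
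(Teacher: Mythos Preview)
Your proposed induction breaks at the step where you try to handle $\Delta \sm v$. You write that, by Corollary~\ref{MainTheorem}, $\Delta \sm v$ is Buchsbaum of dimension $d-1$ and then ``applying the inductive hypothesis to each [connected component], one gets that $\gG(\Delta \sm v)$ is generically $d$-rigid.'' But neither form of induction gives you this. If you are inducting on $d$, then $\Delta \sm v$ has the \emph{same} dimension as $\Delta$ and the hypothesis does not apply. If you are inducting on the number of vertices, then you need each component of $\Delta \sm v$ to be Buchsbaum*, and double Buchsbaumness only tells you it is Buchsbaum --- Example~\ref{ex:counter} shows that Buchsbaum* does not in general pass to vertex deletions. (Relatedly, the assertion that ``the graph of any Cohen--Macaulay complex of dimension $d-1 \ge 2$ is generically $d$-rigid'' is not a theorem of Kalai or Fogelsanger; this is exactly Nevo's result for \emph{doubly} Cohen--Macaulay complexes, so it applies to the link via Corollary~\ref{Links} but not to a merely Cohen--Macaulay complex.)

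The paper avoids this problem by not trying to prove rigidity of $\gG(\Delta \sm v)$ at all. Instead it follows Nevo's scheme of decomposing $\Delta$ into minimal $(d-1)$-cycle complexes, inducting on $d$ with base case $d=2$ (where connected Buchsbaum* reduces to doubly Cohen--Macaulay). The one place where Nevo uses that $\Delta \sm v$ is Cohen--Macaulay --- namely, to conclude that a minimal $(d-2)$-cycle $s$ in $\link_\Delta(v)$ bounds a $(d-1)$-chain in $\Delta \sm v$ --- is replaced by the Buchsbaum* condition itself: $s$ bounds in $\Delta$ (it is the boundary of the cone $v*s$), and Proposition~\ref{Isomorphism}(ii) says the map $\widetilde H_{d-2}(\Delta \sm v;k) \to \widetilde H_{d-2}(\Delta;k)$ is injective, so $s$ already bounds in $\Delta \sm v$. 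This homological fact is precisely what substitutes for the missing Buchsbaum* property of the deletion, and it is the ingredient your star/antistar gluing argument lacks.
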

  \begin{proof}
    As in the proof of \cite[Theorem 1.3]{Ne}, it suffices to show that for 
    $d \ge 2$ every such complex $\Delta$ admits a decomposition into minimal 
    $(d-1)$-cycle complexes, as in \cite[Theorem 3.4]{Ne}. 
    The case $d=2$ is covered by \cite[Theorem 3.4]{Ne}, since every 
    one-dimensional connected Buchsbaum* complex is doubly Cohen-Macaulay 
    (see Example \ref{ex:1dim}). Thus we may assume that $d \ge 3$ and proceed 
    by induction on $d$. Let $v$ be any vertex of $\Delta$ and note
    that the induction hypothesis applies to the complex $\link_\Delta(v)$, 
    which is doubly Cohen-Macaulay over $k$ by Corollary \ref{Links}. Note also
    that if $s$ is a minimal $(d-2)$-cycle for $\link_\Delta(v)$, then there 
    exists a $(d-1)$-chain $c$ for $\Delta \sm v$ such that $\partial_{d-1} (c) = s$.
    Indeed, this follows from condition (ii) of Proposition \ref{Isomorphism}, 
    since $s$ is trivial as an element of $\widetilde{H}_{d-2}(\Delta; k)$.
    The remainder of the proof follows that of \cite[Theorem 3.4]{Ne} without 
    change.     
  \end{proof}

  Given a positive integer $m$, an abstract graph $\gG$ is said to be 
  \emph{$m$-connected} if $\gG$ has at least $m+1$ nodes and any graph obtained from 
  $\gG$ by deleting $m-1$ or fewer nodes and their incident edges is connected 
  (necessarily with at least one edge). Part (i) of the following corollary is a 
  special case of a result independently found by Bj\"orner \cite{Bj-new}. Example 
  \ref{ex:counter} (i) shows that part (ii) is not valid if Buchsbaum* is replaced 
  by doubly Buchsbaum.

  \begin{corollary} \label{Connectivity}
     Let $\Delta$ be a connected simplicial complex of dimension $d-1 \ge 1$. 
     \begin{itemize}
       \item[(i)] If $\Delta$ is Buchsbaum over some field, then the graph 
                  $\gG(\Delta)$ is $(d-1)$-connected.
       \item[(ii)] If $\Delta$ is Buchsbaum* over some field, then the graph 
                  $\gG(\Delta)$ is $d$-connected.
     \end{itemize}
  \end{corollary}
  \begin{proof} 
  Proposition \ref{prop:CMmB*} implies that a one-dimensional connected simplicial 
  complex $\Gamma$ is $m$-Buchsbaum* if and only if $\Gamma$ is $(m+1)$-connected as 
  a graph. Therefore, both parts of the corollary follow from the special case $i=1$ 
  of Theorem \ref{skeletaconnectivity}. Part (ii) also follows from Theorem \ref{graph} 
  for $d \ge 3$, since every generically $d$-rigid graph is $d$-connected, and from 
  the discussion in Example \ref{ex:1dim} for $d = 2$. 
  \end{proof}

  \section{Face enumeration}
  \label{sec:enumeration}

  This section is concerned with enumerative properties of Buchsbaum* complexes.
  Let $\Delta$ be a $(d-1)$-dimensional simplicial complex. For $0 \le i \le d$, we 
  denote by $\fvec_{i-1} (\Delta)$ the number of faces of $\Delta$ of dimension $i-1$ 
  (in particular, we have $\fvec_{-1} (\Delta) = 1$ unless $\Delta = \varnothing$). 
  The $\hvec$-vector of $\Delta$ is the sequence $\hvec(\Delta) = (\hvec_0 (\Delta), 
  \hvec_1 (\Delta),\dots,\hvec_d (\Delta))$ defined by 
    \begin{eqnarray} \label{hdefinition} 
    \hvec_j (\Delta) & = & \sum_{i=0}^j \, (-1)^{j-i} {d-i \choose d-j} \fvec_{i-1}
    (\Delta).
    \end{eqnarray}      
  We refer the reader to \cite[Chapter II]{Sta} for the importance of this concept and
  recall that the numbers $\hvec_j (\Delta)$ are nonnegative integers, if $\Delta$ is 
  Cohen-Macaulay over $k$. 

  The $\phvec$-vector of $\Delta$ is the sequence $\phvec(\Delta) = (\phvec_0 (\Delta), 
  \phvec_1 (\Delta),\dots,\phvec_d (\Delta))$ defined by 
    \begin{eqnarray} \label{hprimdefinition} 
    \phvec_j (\Delta) & = & \hvec_j(\Delta) + {d \choose j} \, \sum_{i=0}^{j-1} \,
                  (-1)^{j-i-1} \wbeta_{i-1}(\Delta),
    \end{eqnarray}      
  where $\wbeta_{i-1}(\Delta) = \dim_k  \widetilde{H}_{i-1} (\Delta;k)$. We note that
  $\phvec_d (\Delta) = \wbeta_{d-1} (\Delta)$ and that if $\Delta$ is Cohen-Macaulay over 
  $k$, then $\phvec (\Delta) = \hvec (\Delta)$. It was proved by Schenzel \cite{Sch} that 
  if $\Delta$ is Buchsbaum over an infinite field $k$, then
  \begin{eqnarray} \label{Parameters}
    \phvec_i (\Delta) \ = \ \dim_k \left( k[\Delta] / (\Theta_1,\dots,\Theta_d)
    \right)_i
  \end{eqnarray}
  for $0 \le i \le d$, where $(\Theta_1,\dots,\Theta_d)$ is a linear system of parameters 
  for $k[\Delta]$ (here we denote by $A_i$ the $i$th graded component of a graded 
  algebra $A$). Thus if $\Delta$ is Buchsbaum over $k$, then the $\phvec (\Delta)$ are 
  nonnegative integers which may depend on the characteristic of $k$. 

  The following proposition gives an analogue for the $\phvec$-vector of a Buchsbaum* 
  complex to a well-known recursive formula for the $\hvec$-vector of a pure simplicial 
  complex; see (\ref{DeletionContraction}) below. We will write $\Delta \slash v = 
  \{ \sigma \sm \{v\}: \sigma \in \Delta, \, v \in \sigma\}$ for the link of a vertex 
  $v$ of $\Delta$. 

  \begin{proposition} \label{Deletion}
    Let $\Delta$ be a $(d-1)$-dimensional Buchsbaum* simplicial complex over 
    $k$. For each vertex $v$ of $\Delta$ and $0 \le j \le d$ we have
      \begin{eqnarray} \label{Edeletion} 
      \phvec_j (\Delta) & = & \phvec_j(\Delta \sm v) + \hvec_{j-1} (\Delta / v),
      \end{eqnarray}      
    where $\hvec_{-1}(\Delta / v) = 0$ by convention. 
  \end{proposition}
  \begin{proof}
     Applied to $\Delta \sm v$, equation \eqref{hprimdefinition} yields
     \begin{eqnarray} \label{appliedtodeletion} 
        \phvec_j(\Delta \sm v) & = & \hvec_j (\Delta \sm v) + {d \choose j} \, 
        \sum_{i=0}^{j-1} \, (-1)^{j-i-1} \wbeta_{i-1}(\Delta \sm v).
     \end{eqnarray}      
     It is well known (see, for instance, \cite[Lemma 4.1]{Ath}) that 
     \begin{eqnarray} \label{DeletionContraction} 
        \hvec_j(\Delta) & = & \hvec_j(\Delta \sm v) + \hvec_{j-1}(\Delta \slash v)
     \end{eqnarray}
     holds for $0 \le j \le d$.
     Combining equations \eqref{appliedtodeletion} and \eqref{DeletionContraction}, we 
     get
     \begin{eqnarray} \label{conclusion} 
        \phvec_j(\Delta \sm v) + \hvec_{j-1} (\Delta / v) & = & 
        \hvec_j(\Delta) + {d \choose j} \, \sum_{i=0}^{j-1} \, (-1)^{j-i-1} 
        \wbeta_{i-1} (\Delta \sm v).
     \end{eqnarray}
     Since $\Delta$ is Buchsbaum* over $k$, we have $\wbeta_{i-1} (\Delta \sm v) = 
     \wbeta_{i-1} (\Delta)$ for $i \le d-1$. Hence the right-hand side of 
     \eqref{conclusion} is equal to that of \eqref{hprimdefinition} and the result
     follows.
  \end{proof}

  \medskip
  The $\pphvec$-vector of $\Delta$ is the sequence $\pphvec(\Delta) = (\pphvec_0 (\Delta), 
  \pphvec_1 (\Delta),\dots,\pphvec_d (\Delta))$ defined by 
    \begin{eqnarray} \label{hppdefinition} 
      \pphvec_j (\Delta) & = & \phvec_j (\Delta) - {d \choose j} \wbeta_{j-1}(\Delta) 
      \ = \ \hvec_j (\Delta) + {d \choose j} \, \sum_{i=0}^{j} \, (-1)^{j-i-1} 
      \wbeta_{j-1}(\Delta)
    \end{eqnarray}      
  for $0 \le j \le d-1$ and $\pphvec_d (\Delta) = \wbeta_{d-1} (\Delta) = \phvec_d 
  (\Delta)$. The $\pphvec$-vector was introduced by Kalai (see \cite[Section 7]{No}) as 
  the ``correct" $\hvec$-vector for orientable homology manifolds and shown to have 
  nonnegative entries for every Buchsbaum simplicial complex $\Delta$ over $k$ in 
  \cite[Theorem 3.4]{NS2}. 
    \begin{proposition} \label{Deletion2}
      Let $\Delta$ be a $(d-1)$-dimensional Buchsbaum* simplicial complex over 
      $k$. For each vertex $v$ of $\Delta$ and $0 \le j \le d$ we have
        \begin{eqnarray} \label{Edeletion2} 
          \pphvec_j (\Delta) & = & \pphvec_j (\Delta \sm v) + \hvec_{j-1} (\Delta / v),
        \end{eqnarray}      
      where $\hvec_{-1}(\Delta / v) = 0$ by convention. 
    \end{proposition}
    \begin{proof}
      The proposed equation is equivalent to (\ref{Edeletion}) for $j = d$ and follows 
      as in the proof of Proposition \ref{Deletion} for $0 \le j \le d-1$.
    \end{proof}

  Recall that a simplicial complex $\Delta$ is called \emph{flag} if every minimal 
  non-face of $\Delta$ has at most two elements. As an application of Proposition 
  \ref{Deletion}, we will show (Corollary \ref{Flag}) that among all Buchsbaum* flag 
  simplicial complexes of dimension $d-1$, the simplicial join of $d$ copies of the 
  zero-dimensional sphere has the minimum $\phvec$-vector. This result generalizes 
  one of \cite{Ath} on Cohen-Macaulay complexes to the setting of Buchsbaum complexes. 
  The question of formulating such a generalization provided the initial motivation 
  for introducing the class of Buchsbaum* complexes. 

  The following proposition extends \cite[Theorem 2.1]{Sta2} (see also \cite[Theorem 
  9.1]{Sta}) in the setting of Buchsbaum complexes.

  \begin{proposition} \label{Monotonicity}
    Let $\Delta$ be a simplicial complex of dimension $d-1$ and $\Gamma$ be a subcomplex
    of dimension $e-1$. Assume that no set of $e+1$ vertices of $\Gamma$ is a face of 
    $\Delta$ (this condition holds automatically if $d=e$). If both $\Gamma$ and $\Delta$
    are Buchsbaum over $k$, then $\phvec_i (\Gamma) \le \phvec_i (\Delta)$ holds for all 
    $0 \le i \le d$. 
  \end{proposition}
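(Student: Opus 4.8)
The plan is to reduce the inequality $\phvec_i(\Gamma)\le\phvec_i(\Delta)$ to a comparison of Hilbert functions of Artinian reductions, in the spirit of the Cohen--Macaulay case treated in \cite[Theorem 2.1]{Sta2}, but carried out with Schenzel's formula \eqref{Parameters} in place of the usual $\hvec$-vector interpretation. First I would assume without loss of generality that $k$ is infinite (the numbers $\phvec_i$ depend only on $\Delta$, $\Gamma$ and the characteristic of $k$, by \eqref{hprimdefinition}, so one may pass to an infinite extension). The key structural fact I want to use is that the inclusion $\Gamma\hookrightarrow\Delta$ induces a surjection of face rings $k[\Delta]\twoheadrightarrow k[\Gamma]$, realized simply by sending $x_\omega\mapsto x_\omega$ if $\omega$ is a vertex of $\Gamma$ and $x_\omega\mapsto 0$ otherwise. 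The combinatorial hypothesis --- that no $(e-1)$-face of $\Delta$ meets $\Gamma$ in more vertices than a face of $\Gamma$, i.e.\ no $e+1$ vertices of $\Gamma$ span a face of $\Delta$ --- is exactly what is needed so that a generic linear system of parameters behaves well simultaneously for both rings.

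The main steps would be as follows. I would choose a linear system of parameters $\Theta_1,\dots,\Theta_d$ for $k[\Delta]$ that is \emph{generic} in the sense that its image in $k[\Gamma]$, after discarding the appropriate number of redundant forms, contains a linear system of parameters $\Theta'_1,\dots,\Theta'_e$ for $k[\Gamma]$; genericity is available because $k$ is infinite. Then $\phvec_i(\Delta)=\dim_k\bigl(k[\Delta]/(\Theta_1,\dots,\Theta_d)\bigr)_i$ by \eqref{Parameters}, since $\Delta$ is Buchsbaum. Applying the surjection $k[\Delta]\twoheadrightarrow k[\Gamma]$, which carries $(\Theta_1,\dots,\Theta_d)$ into an ideal containing $(\Theta'_1,\dots,\Theta'_e)$ plus a further system of linear forms, one obtains a surjection
$$
\bigl(k[\Delta]/(\Theta_1,\dots,\Theta_d)\bigr)_i \;\twoheadrightarrow\;
\bigl(k[\Gamma]/(\Theta'_1,\dots,\Theta'_e)\bigr)_i
$$
in each degree $i$. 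Since $\Gamma$ is Buchsbaum, the right-hand side has dimension $\phvec_i(\Gamma)$, again by \eqref{Parameters}. Comparing dimensions gives $\phvec_i(\Gamma)\le\phvec_i(\Delta)$ for all $i$, including $i=d$ (where $\phvec_d(\Gamma)=\wbeta_{e-1}(\Gamma)$ vanishes unless $e=d$, in which case the two top entries are compared directly).

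The step I expect to be the main obstacle is justifying that a generic linear system of parameters for $k[\Delta]$ restricts to (or contains, after the obvious reduction) a linear system of parameters for $k[\Gamma]$ --- this is precisely where the hypothesis on $e+1$ vertices of $\Gamma$ is used, and it is the analogue of the argument behind \cite[Theorem 2.1]{Sta2}. Concretely, the dimension-theoretic content is that the $e$ linear forms cutting $k[\Gamma]$ down to an Artinian quotient can be taken from among generic linear combinations of the $\Theta_j$, which requires checking that no minimal prime of $k[\Gamma]$ (a coordinate subspace indexed by a facet of $\Gamma$) is contained in the vanishing locus of the restricted forms; the bound on common faces guarantees the relevant coordinate subspaces of $k[\Delta]$ and $k[\Gamma]$ are compatibly positioned. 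Once this transversality is in hand, the surjectivity of the induced maps on Artinian reductions is formal, and the two invocations of Schenzel's identity \eqref{Parameters} close the argument. One should also remark that the case $d=e$ needs no hypothesis because then $\Gamma$ is a full-dimensional subcomplex and the single chain of surjections already does the job.
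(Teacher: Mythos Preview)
Your proposal is correct and takes essentially the same approach as the paper: the paper's own proof simply remarks that, by Schenzel's identity \eqref{Parameters}, one may repeat the proof of \cite[Theorem~2.1]{Sta2} verbatim with $\hvec$-vectors replaced by $\phvec$-vectors, and what you have written is exactly an expansion of that argument. The only point to sharpen is that the generic system $\Theta_1,\dots,\Theta_d$ should be chosen so that $\Theta_{e+1},\dots,\Theta_d$ involve only vertices outside $\Gamma$ (this is where the hypothesis on $e+1$ vertices is used), so that the image ideal in $k[\Gamma]$ \emph{equals} $(\Theta'_1,\dots,\Theta'_e)$ rather than strictly containing it---otherwise the displayed surjection onto $\bigl(k[\Gamma]/(\Theta'_1,\dots,\Theta'_e)\bigr)_i$ does not follow.
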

  \begin{proof}
    By equation (\ref{Parameters}), a proof of the proposition can be given by simply 
    replacing the term $\hvec$-vector with $\phvec$-vector in the proof of \cite[Theorem 
    2.1]{Sta2}. 
  \end{proof}

  \begin{corollary} \label{Flag}
     If $\Delta$ is a $(d-1)$-dimensional flag simplicial complex which is Buchsbaum* 
     over $k$, then the inequalities 
       \begin{equation} \label{eq:flag}
       \phvec_i (\Delta) \ge {d \choose i}
       \end{equation} 
     hold for $0 \le i \leq d$.  
  \end{corollary}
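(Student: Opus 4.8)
The plan is to induct on the dimension $d-1$, using Proposition \ref{Deletion} to peel off a vertex and Proposition \ref{Monotonicity} to control the base of the induction via a suitable subcomplex. First I would dispose of the base case $d=1$: a one-dimensional flag Buchsbaum* complex is, by Example \ref{ex:1dim}, a disjoint union of doubly connected graphs, so it has at least two vertices; one checks directly that $\phvec_0(\Delta)=1={1\choose 0}$ and $\phvec_1(\Delta)=\wbeta_0(\Delta)+{\rm (number\ of\ edges)}-{\rm (number\ of\ vertices)}+1$, and flagness plus double connectedness forces this to be at least $1={1\choose 1}$. (The case $d=0$, where $\Delta$ is a nonempty vertex set, is trivial since $\phvec_0=1$.)

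For the inductive step, fix a vertex $v$ of $\Delta$. By Corollary \ref{MainTheorem} and Corollary \ref{Links}, $\Delta\sm v$ is Buchsbaum* (of the same dimension, since $\Delta$ is doubly Buchsbaum) and $\Delta/v=\link_\Delta(v)$ is doubly Cohen-Macaulay; both are flag, since any induced subcomplex or link of a flag complex is flag. Proposition \ref{Deletion} gives
\begin{equation*}
\phvec_j(\Delta) \;=\; \phvec_j(\Delta\sm v) \;+\; \hvec_{j-1}(\Delta/v).
\end{equation*}
By the inductive hypothesis applied to $\Delta\sm v$ (dimension $d-1$, flag, Buchsbaum* over $k$) we get $\phvec_j(\Delta\sm v)\ge{d\choose j}$. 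It remains to show $\hvec_{j-1}(\Delta/v)\ge 0$ and, crucially, to improve the bound to absorb the Pascal recursion ${d\choose j}={d-1\choose j}+{d-1\choose j-1}$. Since $\Delta/v$ is doubly Cohen-Macaulay it is in particular Cohen-Macaulay, so $\hvec(\Delta/v)=\phvec(\Delta/v)$ and its entries are nonnegative; moreover $\Delta/v$ is a flag Cohen-Macaulay (indeed doubly Cohen-Macaulay, hence Buchsbaum*) complex of dimension $d-2$, so the result of \cite{Ath} — or the inductive hypothesis itself — yields $\hvec_{j-1}(\Delta/v)\ge{d-1\choose j-1}$.

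The honest bookkeeping is: applying the inductive hypothesis to $\Delta\sm v$ only gives ${d\choose j}$, which already matches the target, so naively adding a nonnegative term suffices — but we must make sure the inductive hypothesis for $\Delta\sm v$ is the right one. Here I expect the main obstacle: $\Delta\sm v$ has the \emph{same} dimension $d-1$ as $\Delta$, so peeling off $v$ does not lower the dimension and a naive induction on dimension does not terminate. The fix is to induct instead on the number of vertices (for fixed dimension $d-1$), noting that $\Delta\sm v$ has one fewer vertex; the base of that induction — the flag Buchsbaum* complex on the fewest vertices — is forced by flagness to contain the join of $d$ copies of $S^0$ as a subcomplex on a vertex set meeting every facet's complement appropriately, and Proposition \ref{Monotonicity} (with $\Gamma$ that join, for which $\phvec_i(\Gamma)={d\choose i}$, and the hypothesis that no $d+1$ of its vertices form a face of $\Delta$, which holds because those vertices span a $d$-dimensional non-face in the join) gives $\phvec_i(\Delta)\ge\phvec_i(\Gamma)={d\choose i}$ directly. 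In fact Proposition \ref{Monotonicity} alone may suffice: a flag complex $\Delta$ of dimension $d-1$ which is doubly connected in a strong enough sense contains such a subcomplex $\Gamma$, and then \eqref{eq:flag} is immediate — so the real content is the combinatorial lemma that every $(d-1)$-dimensional flag Buchsbaum* complex contains a copy of the $d$-fold join of $S^0$ as a subcomplex satisfying the hypothesis of Proposition \ref{Monotonicity}. Producing that subcomplex is where the Buchsbaum* hypothesis (via double connectedness of links, Corollary \ref{Links}) does the work, and it is the step I would spend the most care on.
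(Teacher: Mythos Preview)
Your central claim that $\Delta \sm v$ is Buchsbaum* is incorrect, and this breaks both of your proposed inductions. Corollary \ref{MainTheorem} only tells you that $\Delta$ is doubly Buchsbaum, hence that $\Delta \sm v$ is \emph{Buchsbaum} of the same dimension; there is no reason for it to be Buchsbaum*. Indeed, if $\Delta$ triangulates a connected orientable homology manifold, then $\Delta \sm v$ has vanishing top homology and so fails Corollary \ref{NonVanishingTopHomology}. Thus you cannot feed $\Delta \sm v$ back into the inductive hypothesis, whether you induct on dimension or on the number of vertices.

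The paper's argument (following \cite[Section 4]{Ath}) avoids this by never invoking any hypothesis on $\Delta \sm v$ beyond Buchsbaumness. The key step you are missing is to apply Proposition \ref{Monotonicity} with $\Gamma = \Delta/v$ sitting inside $\Delta \sm v$: the link has dimension $d-2$, and flagness guarantees that no $d$ vertices of $\Delta/v$ span a face of $\Delta \sm v$ (otherwise, together with $v$, they would span a $d$-face of $\Delta$). Since both complexes are Buchsbaum, Proposition \ref{Monotonicity} gives $\phvec_j(\Delta \sm v) \ge \phvec_j(\Delta/v) = \hvec_j(\Delta/v)$. Combined with Proposition \ref{Deletion} this yields
\[
\phvec_j(\Delta) \;\ge\; \hvec_j(\Delta/v) + \hvec_{j-1}(\Delta/v) \;\ge\; \binom{d-1}{j} + \binom{d-1}{j-1} \;=\; \binom{d}{j},
\]
the last inequality coming from the inductive hypothesis (or directly from \cite{Ath}) applied to the flag doubly Cohen-Macaulay complex $\Delta/v$ of dimension $d-2$. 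So the induction is genuinely on dimension, carried entirely by the link; $\Delta \sm v$ enters only through the monotonicity comparison. Your alternative idea of locating an induced cross-polytope and applying Proposition \ref{Monotonicity} to it directly is plausible, but producing that subcomplex amounts to essentially the same inductive argument on links, and is not how the paper proceeds.
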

  \begin{proof}
     In view of Propositions \ref{Deletion} and \ref{Monotonicity}, this follows 
     by replacing $\hvec$-vectors by $\phvec$-vectors in the argument of \cite[Section 
     4]{Ath} and using the fact (Corollary \ref{MainTheorem}) that $\Delta$ is doubly 
     Buchsbaum over $k$.  
  \end{proof}

  We note that by the result of \cite{NS2} on the nonnegativity of $\pphvec(\Delta)$, 
  mentioned earlier, we have $\phvec_i (\Delta) \ge {d \choose i} \wbeta_{i-1}(\Delta)$ 
  for every $(d-1)$-dimensional Buchsbaum complex $\Delta$.   

  \medskip
  We conclude this section with two results on the face enumeration of Buchsbaum* 
  complexes. They both extend results of Nevo \cite{Ne} on doubly Cohen-Macaulay 
  complexes and rely heavily on Theorem \ref{graph}. 

  \begin{proposition} \label{prop:LBT}
     Let $\Delta$ be a $(d-1)$-dimensional Buchsbaum* simplicial complex over 
     some field and let $n$ be the number of vertices of $\Delta$. If $d \ge 3$, 
     then $\fvec_i (\Delta) \ge \fvec_i (n, d)$ holds for $0 \le i \le d-1$, 
     where $$ \fvec_i (n, d) = \begin{cases}
        {d \choose i} n - {d+1 \choose i+1} i,&\text{if \ $0 \le i \le d-2$} \\
        (d-1)n - (d+1)(d-2), & \text{if \ $i=d-1$}
     \end{cases}$$
     is the number of $i$-dimensional faces of a stacked $(d-1)$-dimensional sphere 
     with $n$ vertices.
  \end{proposition}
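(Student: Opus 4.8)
The plan is to obtain the edge inequality from generic rigidity of the graph of $\Delta$ and then propagate it to faces of every dimension by an induction on $d$, exploiting that the vertex links of a Buchsbaum* complex are again connected Buchsbaum* complexes of one smaller dimension.

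The entry $i=0$ is immediate: $\fvec_0(\Delta)=n=\fvec_0(n,d)$. Because the rigidity input applies to connected complexes, I take $\Delta$ to be connected (for a disconnected $\Delta$ with components $\Delta_1,\dots,\Delta_c$ one only gets $\fvec_i(\Delta)\ge\sum_j\fvec_i(n_j,d)$, and the stated bound is sharp precisely when $c=1$, as a disjoint union of boundaries of simplices shows). For $i=1$: by Theorem \ref{graph} the graph of $\Delta$ is generically $d$-rigid, and since a generically $d$-rigid graph on $n\ge d$ vertices has at least ${d \choose 1}n-{d+1 \choose 2}$ edges (a standard fact from rigidity theory), we obtain $\fvec_1(\Delta)\ge dn-{d+1 \choose 2}=\fvec_1(n,d)$; equivalently $\sum_v\deg_\Delta(v)=2\fvec_1(\Delta)\ge 2\fvec_1(n,d)$.

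For $2\le i\le d-1$ I would argue by induction on $d\ge 3$. Every vertex link $\link_\Delta(v)$ has dimension exactly $d-2$ (purity of the Buchsbaum complex $\Delta$) and is doubly Cohen-Macaulay over $k$ by Corollary \ref{Links}; being Cohen-Macaulay of positive dimension it is connected, and by Proposition \ref{CMBuchsbaum}(i) it is therefore a connected Buchsbaum* complex of dimension $d-2$ on $\deg_\Delta(v)$ vertices. Hence the induction hypothesis applies to it when $d\ge 4$, while for the base case $d=3$ the links are graphs and the inequality needed is the elementary fact that a doubly connected graph has at least as many edges as vertices; in either case $\fvec_{i-1}(\link_\Delta(v))\ge\fvec_{i-1}(\deg_\Delta(v),d-1)$, where $\fvec_{i-1}(m,d-1)={d-1 \choose i-1}m-{d \choose i}(i-1)$ for $i\le d-2$ and $\fvec_{d-2}(m,d-1)=(d-2)m-d(d-3)$. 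Now I sum the counting identity $\sum_v\fvec_{i-1}(\link_\Delta(v))=(i+1)\fvec_i(\Delta)$ (each $i$-face has $i+1$ vertices), insert the link bound, then use $\sum_v\deg_\Delta(v)\ge 2\fvec_1(n,d)$, and simplify by means of the identities $2d{d-1 \choose i-1}=2i{d \choose i}$ and $d(d+1){d-1 \choose i-1}=i(i+1){d+1 \choose i+1}$; the right-hand side then collapses exactly to $(i+1)\fvec_i(n,d)$, which gives $\fvec_i(\Delta)\ge\fvec_i(n,d)$.

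The genuine obstacle here is Theorem \ref{graph}, that is, establishing generic $d$-rigidity of the graph of a connected Buchsbaum* complex; this is proved separately in Section \ref{sec:graph}, and once it is granted the remainder is routine combinatorics. The points that still require care are: anchoring the induction at $d=3$ through the graph-theoretic base case rather than circularly; invoking purity so that each vertex link really has dimension $d-2$ (and not less); and checking the two binomial identities, so that the telescoped estimate reproduces the stacked-sphere face numbers exactly and not merely up to lower-order terms.
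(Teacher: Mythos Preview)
Your argument is correct and is exactly the Kalai--type reduction (rigidity gives the edge bound, then induct on $d$ by applying the bound to the vertex links and summing) that the paper defers to by citing \cite[Section~1]{Ne}; you have simply written out what the paper leaves to that reference. Your observation that connectedness is needed for the statement as written is also correct and worth recording.
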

  \begin{proof}
     The assertion follows from Theorem \ref{graph} and the discussion 
     in \cite[Section 1]{Ne}. 
  \end{proof}

  The $\gvec$-vector of $\Delta$ is the sequence $(\gvec_0 (\Delta), \gvec_1 
  (\Delta),\dots, \gvec_{\lfloor d/2 \rfloor} (\Delta))$, defined by $\gvec_i (\Delta) 
  = \hvec_i (\Delta) - \hvec_{i-1} (\Delta)$ for $i \ge 1$ and $\gvec_0 (\Delta) = 
  \hvec_0 (\Delta) = 1$. Recall that a sequence $(a_0, \ldots, a_r)$ of nonnegative 
  integers is called an \emph{$M$-vector} if there exists a standard graded $k$-algebra 
  $A = A_0 \oplus \cdots \oplus A_r$ such that $\dim_k A_i = a_i$ for every $i$; see 
  \cite[Section 4.2]{BH} for details and for further information. 

  \begin{proposition} \label{prop:g}
     Let $\Delta$ be a connected $(d-1)$-dimensional simplicial complex which is 
     Buchsbaum* over some field. If $d \ge 4$, then $(\gvec_0 (\Delta), \gvec_1 
     (\Delta), \gvec_2 (\Delta))$ is an $M$-vector.
  \end{proposition}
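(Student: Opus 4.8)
The plan is to reduce the assertion to three numerical inequalities about $\hvec(\Delta)$ and then to feed in the rigidity result of Theorem~\ref{graph}. Write $n=\fvec_0(\Delta)$ and $e=\fvec_1(\Delta)$. From the definition \eqref{hdefinition} one computes directly that $\hvec_1(\Delta)=n-d$ and $\hvec_2(\Delta)=e-(d-1)n+{d\choose 2}$, so that
$$\gvec_1(\Delta)=n-d-1,\qquad \gvec_2(\Delta)=\hvec_2(\Delta)-\hvec_1(\Delta)=e-dn+{d+1\choose 2}.$$
By Macaulay's theorem (see \cite[Chapter~1.4]{BH}), a triple of nonnegative integers $(1,g_1,g_2)$ is an $M$-vector if and only if $g_1\ge 0$ and $0\le g_2\le{g_1+1\choose 2}$. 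Hence it suffices to establish these bounds for $\gvec_1(\Delta)$ and $\gvec_2(\Delta)$.

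For the lower bounds I would invoke Theorem~\ref{graph}: since $d-1\ge 3\ge 2$ and $\Delta$ is connected and Buchsbaum* over some field, the graph $G$ of $\Delta$ is generically $d$-rigid. By Corollary~\ref{Connectivity}, $G$ is then $d$-connected, so it has at least $d+1$ vertices; thus $n\ge d+1$ and $\gvec_1(\Delta)\ge 0$. Moreover, a generically $d$-rigid graph on $n\ge d+1$ vertices has at least $dn-{d+1\choose 2}$ edges --- this is the rank of the generic $d$-dimensional rigidity matroid on $n$ vertices, and it is precisely the case $i=1$ of Proposition~\ref{prop:LBT}. Therefore $e\ge dn-{d+1\choose 2}$, which says exactly that $\gvec_2(\Delta)\ge 0$.

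The upper bound $\gvec_2(\Delta)\le{\gvec_1(\Delta)+1\choose 2}$ is elementary and requires no hypothesis beyond $\dim\Delta=d-1$: since $e\le{n\choose 2}$ and ${n\choose 2}-dn+{d+1\choose 2}={n-d\choose 2}$ (a routine binomial identity), we get
$$\gvec_2(\Delta)=e-dn+{d+1\choose 2}\ \le\ {n\choose 2}-dn+{d+1\choose 2}\ =\ {n-d\choose 2}\ =\ {\gvec_1(\Delta)+1\choose 2}.$$
Combining the three inequalities, $(\gvec_0(\Delta),\gvec_1(\Delta),\gvec_2(\Delta))$ satisfies the Macaulay condition and is an $M$-vector, as claimed.

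The substantive input here is Theorem~\ref{graph}, whose proof --- extending Nevo's \cite{Ne} rigidity argument for doubly Cohen-Macaulay complexes to the Buchsbaum* setting --- is the real obstacle; granting it, the remaining steps are only the routine translation between the rigidity edge bound and the $\hvec$- and $\gvec$-vector identities together with the Macaulay characterization of length-three $M$-vectors. It is also worth noting that the hypothesis $d\ge 4$ is exactly what makes $\gvec_2$ a legitimate entry of the $\gvec$-vector (that is, $\lfloor d/2\rfloor\ge 2$), so the statement carries no content when $d=3$.
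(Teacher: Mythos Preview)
Your argument is correct, and it rests on the same substantive input as the paper's proof---Theorem~\ref{graph}---but the route you take from rigidity to the $M$-vector conclusion is different. The paper simply invokes the discussion in \cite[Section~2]{Ne}: there one uses the fact that generic $d$-rigidity of $\gG(\Delta)$ produces, for a generic linear system of parameters $\Theta$, a linear form $\omega$ for which multiplication $\omega:(k[\Delta]/\Theta)_1\to(k[\Delta]/\Theta)_2$ is injective; since connectedness gives $\hvec_i(\Delta)=\phvec_i(\Delta)=\dim_k(k[\Delta]/\Theta)_i$ for $i\le 2$, the triple $(\gvec_0,\gvec_1,\gvec_2)$ is then literally the Hilbert function of $(k[\Delta]/(\Theta,\omega))$ in degrees $0,1,2$ and hence an $M$-vector by definition. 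Your approach bypasses this ring-theoretic interpretation entirely and instead verifies the Macaulay inequalities for a length-three sequence by hand, reading $\gvec_2(\Delta)\ge 0$ off the edge lower bound for generically $d$-rigid graphs and getting the upper bound $\gvec_2(\Delta)\le\binom{\gvec_1(\Delta)+1}{2}$ from the trivial estimate $e\le\binom{n}{2}$. Your version is more elementary and self-contained; the paper's (via Nevo) is more conceptual and explains \emph{why} the $M$-vector condition holds by actually exhibiting the graded algebra.
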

  \begin{proof}    
     Since $\Delta$ is connected, we have $\phvec_i (\Delta) = \hvec_i (\Delta)$ 
     for $i \le 2$. Therefore (\ref{Parameters}) continues to hold for $i \le 2$,
     if $\phvec_i (\Delta)$ is replaced by $\hvec_i (\Delta)$. Thus the assertion 
     follows from Theorem \ref{graph} as in the discussion in \cite[Section 2]{Ne}.
  \end{proof}

  The following question is an extension to Buchsbaum* complexes of a question 
  posed by A. Bj\"orner and E. Swartz (see \cite[Problem 4.2]{Sw}) for doubly 
  Cohen-Macaulay complexes. An example pointed out by E. Swartz (personal 
  communication with the authors) shows that for $d = 7$, the analogous question 
  with the numbers $\pphvec_i (\Delta)$ replaced by the $\pphvec_i (\Delta)$ has 
  a negative answer.

  \begin{question} \label{que:phh}
    Are the following true for every $(d-1)$-dimensional Buchsbaum* simplicial complex 
    $\Delta$ over $k$? 
    \begin{itemize}
       \item[(i)] $\pphvec_i (\Delta)  \le \pphvec_{d-i}(\Delta)$ for $0 \le i \le 
       \lfloor d/2 \rfloor$.
       \item[(ii)] $(\ppgvec_0(\Delta),\dots,\ppgvec_{\lfloor d/2 \rfloor} (\Delta) )$ 
       is an $M$-vector, where $\ppgvec_i(\Delta) = \pphvec_i(\Delta) - \pphvec_{i-1}
       (\Delta)$ for $i \ge 1$ and $\ppgvec_0 (\Delta) = 1$.
    \end{itemize}
  \end{question}

  \section{Further results}
  \label{sec:questions}

  This section summarizes some interesting results which appeared in the literature 
  after this paper was publicized, giving further important properties of 
  Buchsbaum* complexes.

  \medskip
  \noindent
  1. Let $\Delta$ be a $(d-1)$dimensional Buchsbaum simplicial complex over an 
  infinite field $k$ and let $\Theta = (\Theta_1, \Theta_2,\dots,\Theta_d)$ be a linear 
  system of parameters for $k[\Delta]$. Consider the ring $\overline{k[\Delta]} = 
  (k[\Delta]/\Theta)/I$, where $I = \bigoplus_{i=1}^{d-1} \, {\rm Soc} 
  (k[\Delta]/\Theta)_i$. It was proved by Novik and Swartz in \cite[Theorem 3.5]{NS2} 
  that we have
    \begin{equation} \label{eq:NS}
      \dim_k \, ({\rm Soc} (k[\Delta]/\Theta))_i \, \ge \, {d \choose i} \wbeta_{i-1}
      (\Delta) 
    \end{equation} 
  or, equivalently,
    \begin{equation} \label{eq:NS2}
      \pphvec_i (\Delta) \, \ge \, \dim_k \, \overline{k[\Delta]}_i,
    \end{equation} 
  for $0 \le i \le d$ and in \cite[Theorem 1.3]{NS} that equality holds in (\ref{eq:NS}) 
  if $\Delta$ is a connected orientable homology manifold over $k$. Nagel \cite[Theorem 
  1.1]{Na} showed that equality holds in (\ref{eq:NS}) for every Buchsbaum* complex 
  $\Delta$ over $k$ and all $0 \le i \le d$. He also showed \cite[Theorem 1.2]{Na} that 
  if $\Delta$ is Buchsbaum* over $k$, then $\overline{k[\Delta]}$ is a level ring of 
  Cohen-Macaulay type $\wbeta_{i-1} (\Delta)$ and socle degree $d$. This result provides 
  an analogue to \cite[Theorem 1.4]{NS}, which states that $\overline{k[\Delta]}$ is a 
  Gorenstein ring for every connected orientable homology manifold $\Delta$ over $k$.  

  \medskip
  \noindent
  2. One of the standard constructions on simplicial complexes is rank selection
  on balanced complexes; see \cite[Section III.4]{Sta} for an exposition of 
  these concepts. Responding to a question raised in an earlier version of this paper,
  Browder and Klee \cite{BK} have shown that if a balanced simplicial complex $\Delta$ 
  is Buchsbaum* over $k$, then so is every rank selected subcomplex of $\Delta$. Among
  other results, they have also generalized the inequalities (\ref{eq:flag}) to 
  $m$-Buchsbaum* complexes and treated the case of equality. 

  \medskip
  \noindent
  3. It was shown by I. Novik (personal communication with the authors) that, 
  under the assumptions of Corollary \ref{Flag}, the inequalities (\ref{eq:flag}) 
  can be strengthened to 
    \begin{equation} \label{eq:flag2}
      \pphvec_i (\Delta) \ge {d \choose i}
    \end{equation} 
  for $0 \le i \le d-2$. The proof uses results from \cite{NS2} and then follows 
  the general outline of the proof of Corollary \ref{Flag}. 
    
  \section*{Acknowledgments} The authors thank Anders Bj\"orner for suggesting 
    the terminology Buchsbaum* complex and for pointing out useful references. 
    They also thank Isabella Novik and Ed Swartz for their comments on an earlier 
    version of this paper and for their help with the formulation of Question 
    \ref{que:phh}. The first author was supported by the 70/4/8755 ELKE research fund 
    of the University of Athens.

\end{document}